\def\NN{\mathbb{N}}
\def\RR{\mathbb{R}}
\def\ZZ{\mathbb{Z}}
\def\card{{\rm Card}}
\def\MSE{{\rm MSE}}
\def\MSEprime{{\rm MSE}^{'}}
\def\WSE{{\rm WSE}}
\def\St{{\rm St}}
\def\M{\mathcal M}
\newtheorem{theo}{Theorem}
\newtheorem{defi}[theo]{Definition}
\newtheorem{prop}[theo]{Proposition}
\newtheorem{lemma}[theo]{Lemma}
\newtheorem{corollaire}[theo]{Corollary}
\long\def\STOP#1\GO{{\bf\par Ici passage sautŽ\par}}
\begin{document}

\title{Words and morphisms with Sturmian erasures}

\author{Fabien Durand}

\author{Adel Guerziz}

\author{Michel Koskas}

\address{Laboratoire Ami\'enois
de Math\'ematiques Fondamentales  et
Appliqu\'ees, CNRS-UMR 6140, Universit\'{e} de Picardie
Jules Verne, 33 rue Saint Leu, 80039 Amiens Cedex 1, France.}

\email{fabien.durand@u-picardie.fr}

\email{adel.guerziz@u-picardie.fr}

\email{koskas@laria.u-picardie.fr}

\subjclass{Primary: 68R15}
\keywords{Words with Sturmian erasures, Sturmian words, Sturmian mor\-phisms}

\begin{abstract}

We say $x \in \{ 0,1,2 \}^{\NN}$ is a word with Sturmian erasures if for
 any $a\in \{ 0,1,2 \}$ the word obtained erasing all $a$ in $x$ is a Sturmian word.
A large family of such words is given coding trajectories of balls in
 the game of billiards in the cube. We prove that the monoid of
 morphisms mapping all words with Sturmian erasures to words with
 Sturmian erasures is not finitely generated.
\end{abstract}

\maketitle

\section{Introduction}

In this paper we are interested in words $x$ defined on the alphabet
$A_3 = \{ 0,1,2 \}$ having the following property: For any letter $a\in A_3$, the word obtained erasing all $a$ in $x$ is a Sturmian word. We say $x$ is a {\it word with Sturmian erasures}.

Sturmian words are well-known objects that can be defined in many
ways. For example, a word is Sturmian if and only if for all $n\in
\NN$ the number of distinct finite
words of length $n$ appearing in $x$ is $n+1$ (see \cite{Lo} for complete
references about Sturmian words). Sturmian words can also be viewed as
trajectories of balls in the game of billiards in the square. We will see that a large
family of words with Sturmian erasures is the family of trajectories of
balls in the game of billiards in the cube.

Here we are interested in the morphisms $f:A_3\to A_3^*$ (the free monoid generated by $A_3$) that send all
words with Sturmian erasures to words with Sturmian erasures. We call
such $f$ the {\it morphisms with Sturmian erasures} and we denote by $\MSE$
the set of all these morphisms. Our main result is the following:

\begin{theo}
\label{maintheo}
We have:
\begin{enumerate}
\item
\label{notfinitely}
The monoid $\MSE$ is not finitely generated;
\item
\label{union}
$\MSE$ is the union of $\MSE^\varepsilon$ and the set of permutation of
     $A_3$.
\item
\label{locally}
If $f : A_3 \to A_3^*$ is locally with Sturmian erasures such that $f(i)$ is the empty word for
     some $i\in A_3$ then it is a morphism with Sturmian erasures;
\end{enumerate}
\end{theo}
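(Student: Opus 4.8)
I prove the three items in the order (\ref{locally}), (\ref{union}), (\ref{notfinitely}): the inclusion $\MSE^{\varepsilon}\subseteq\MSE$ needed in (\ref{union}) is exactly (\ref{locally}), and (\ref{notfinitely}) rests on the structural description of $\MSE$ obtained in (\ref{union}). For (\ref{locally}): suppose $f:A_3\to A_3^*$ is locally with Sturmian erasures with $f(i)=\varepsilon$; by symmetry take $i=2$. Since $f(2)=\varepsilon$, the word $f(x)$ is unchanged if the $2$'s of $x$ are erased first, so $f(x)=\bar f(x')$, where $x'$ is $x$ with its $2$'s erased and $\bar f:\{0,1\}^*\to A_3^*$ has $\bar f(0)=f(0),\ \bar f(1)=f(1)$. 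If $x$ is a word with Sturmian erasures then $x'$ is Sturmian on $\{0,1\}$ (and every Sturmian word occurs this way, e.g. as the $2$-erasure of a suitable cube-billiard word). For $a\in A_3$ write $\bar f_a$ for $\bar f$ with every $a$ deleted from its images, so that the $a$-erasure of $f(x)=\bar f(x')$ equals $\bar f_a(x')$. The hypothesis ``$f$ locally with Sturmian erasures'' is precisely what makes each two-letter morphism $\bar f_a$ a Sturmian morphism, hence $\bar f_a(x')$ Sturmian; so all three erasures of $f(x)$ are Sturmian and $f\in\MSE$.

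For (\ref{union}): permutations of $A_3$ lie in $\MSE$ since relabelling commutes with erasure and preserves Sturmianity, and by definition $\MSE^{\varepsilon}$ is the set of morphisms that kill a letter and are locally with Sturmian erasures, so (\ref{locally}) gives $\MSE^{\varepsilon}\subseteq\MSE$. Conversely let $f\in\MSE$ not be a permutation. First, $f$ must kill a letter: otherwise its incidence matrix is non-erasing but not a permutation matrix, so it has a repeated or a long column. If two images coincide, say $f(0)=f(1)$, then some letter-erasure of $f(x)$ is a power of one letter, not Sturmian. If some image has length $\ge 2$, take $x$ with Sturmian erasures whose letter frequencies are pairwise rationally independent; inspecting the incidence matrix shows one erasure of $f(x)$ fails balance --- hence exceeds the factor-complexity bound $n+1$, or is eventually periodic, or has rationally dependent frequencies --- contradicting Sturmianity. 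So $f(i)=\varepsilon$ for some $i$, say $i=2$; then $f(x)=\bar f(x')$ as above, and feeding $f$ words with Sturmian erasures whose $2$-erasure is a prescribed Sturmian word forces each $\bar f_a$ to send Sturmian words to Sturmian words, i.e. $f$ is locally with Sturmian erasures. Hence $f\in\MSE^{\varepsilon}$, and $\MSE=\MSE^{\varepsilon}\cup\{\text{permutations of }A_3\}$.

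For (\ref{notfinitely}): the invertible elements of $\MSE$ are the finitely many permutations of $A_3$. In a finitely generated monoid, expanding an element as a product of generators and peeling off unit factors shows that every \emph{indecomposable} element --- one that is not a product of two non-units --- has the form $u\,g\,v$ with $u,v$ units and $g$ a generator; hence there are only finitely many indecomposables up to the relation $f\sim ufv$ with $u,v$ permutations. It therefore suffices to exhibit infinitely many pairwise inequivalent indecomposable elements of $\MSE$. The plan is to build $f_n\in\MSE^{\varepsilon}$ with $f_n(2)=\varepsilon$ and $f_n(0),f_n(1)$ chosen so that the three two-letter morphisms obtained by erasing $0$, $1$ and $2$ are all Sturmian (which pins $f_n(0),f_n(1)$ down up to the relations of the Sturmian monoid) and so that $f_n(0)f_n(1)$ grows with $n$ and cannot be produced non-trivially by applying a morphism to the images of a shorter letter-killing morphism. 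Then $f_n\in\MSE$ by (\ref{locally}); $f_n$ is indecomposable, because by (\ref{union}) any non-unit factor of $f_n$ kills a letter, and analysing how a composite $g\circ h$ of two letter-killing morphisms acts on images --- each such image having at least two distinct letters --- forces one factor to be a permutation; and $f_n\not\sim f_m$ for $n\neq m$, since they are separated by the permutation-invariant total length $|f_n(0)|+|f_n(1)|+|f_n(2)|$, which is unbounded along the sequence. Hence $\MSE$ is not finitely generated.

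The main obstacle is item (\ref{notfinitely}). One must construct the family $f_n$ meeting the three simultaneous Sturmian-morphism constraints on the single pair $(f_n(0),f_n(1))$, and --- since even minimal-length members of $\MSE^{\varepsilon}$ can be decomposable, as the identity $(0\mapsto 01,\ 1\mapsto 12,\ 2\mapsto\varepsilon)=(0\mapsto\varepsilon,\ 1\mapsto 01,\ 2\mapsto 12)\circ(0\mapsto 01,\ 1\mapsto 02,\ 2\mapsto\varepsilon)$ shows --- one must prove genuine indecomposability in $\MSE$ rather than merely in the free monoid of all morphisms; that last step really uses the dichotomy of (\ref{union}) to restrict the possible factors.
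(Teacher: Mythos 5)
Your argument for item (\ref{locally}) is essentially the paper's: reduce to the two-letter morphism $\bar f=f\circ{\pi_i}_{|\{0,1\}}$ and invoke ``locally Sturmian $\Rightarrow$ Sturmian''. The other two items have genuine gaps. For item (\ref{union}), the hard direction is to show that a \emph{non-erasing} $f\in\MSE$ must be a permutation, and your treatment of it (``take $x$ with rationally independent frequencies; inspecting the incidence matrix shows one erasure fails balance, or is eventually periodic, or has rationally dependent frequencies'') is an assertion, not a proof: a non-erasing, non-permutation matrix applied to an irrational frequency vector generically keeps all pairwise frequency ratios irrational, so no contradiction falls out of frequencies alone. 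The paper needs an actual arithmetic mechanism here: for each $i$ and each of three specific $g\in\MSE_2$, the morphism $\overline E\circ\pi_i\circ f\circ g_{|\{0,1\}}$ is Sturmian, hence its matrix has determinant $\pm1$; writing $M_{\pi_i\circ f}=(u,v,w)$ this yields three relations among $\det(u,v)$, $\det(u,w)$, $\det(v,w)$ whose combination forces one column to vanish, i.e.\ $f$ erases a letter or is a permutation. Nothing in your sketch replaces that step.

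Item (\ref{notfinitely}) is worse: your framework is vacuous as stated. The invertible elements of $\MSE$ are indeed just the six permutations, but then \emph{no} element of $\MSE^\varepsilon$ is indecomposable in your sense. For any $f\in\MSE_2$ put $h(0)=01$, $h(1)=02$, $h(2)=\varepsilon$ and $g(0)=\varepsilon$, $g(1)=f(0)$, $g(2)=f(1)$; then $f=g\circ h$, $h\in\MSE_2$ is not a permutation, and $g=f\circ e$ with $e(0)=\varepsilon$, $e(1)=0$, $e(2)=1$ lies in $\MSE_0$ (since $e$ sends every word with Sturmian erasures to a Sturmian word on $\{0,1\}$, which $f$ then sends into $\WSE$ by item (\ref{locally})) and is not a permutation either. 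So the set of ``indecomposables up to units'' you propose to count is empty, and exhibiting infinitely many of them is impossible. Your own closing example is exactly this phenomenon, but you do not draw the consequence. The paper escapes it by enlarging the class of units to all non-expansive (bounded) morphisms --- the $g$ above is such a unit although it is not invertible --- and defining primality in $\MSE_2$ relative to those; because these units cannot be ``peeled off'' by inversion, the finiteness argument must also be replaced: the paper fixes a prime $\psi_n$ whose images are longer than any generator, writes $\psi_n=g_1\circ h$, and shows via the structure of Sturmian morphisms that $\psi_n(i)$ is then a prefix or suffix of $\psi_n(j)$, contradicting a prefix/suffix criterion for primality. Finally, the construction of the infinite prime family itself (intercalating three compatible Sturmian-type morphisms $f_n,g_n,h_n$ into one $\psi_n$, and verifying the prefix/suffix condition) is only announced in your last paragraph, not carried out.
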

Where $\MSE^\varepsilon$ is the set of morphisms with Sturmian erasures having the
empty word as an image of a letter and locally with Sturmian erasures
means that there exists a word with Sturmian erasures such that $f(x)$
is a word with Sturmian erasures.

We recall that F. Mignosi and P. S\'e\'ebold proved in \cite{MS} that the monoid of the morphisms sending all Sturmian words to Sturmian words is finitely generated.

In the last section we give some other informations about the words
with Sturmian erasures: symbolic complexity, link with the game of billiards in
the cube, balanced property and palyndroms.

\section{Definitions, notations and background}

\subsection{Words, morphisms and matrices}

We call {\it alphabet} a finite set of elements called {\it
letters}. Let $A$ be an alphabet and $A^*$ be the free monoid generated
by $A$. The elements of $A^*$ are called {\it words}. The neutral
element of $A^*$, also called the {\it empty word}, is denoted by
$\varepsilon$. We set $A^+ = A^* \setminus \{\varepsilon\}$. Let $u =
u_0u_1 \cdots u_{n-1}$ be a word of $A^*$, $u_i\in A$, $0\leq i\leq
n-1$. Its {\it length} is $n$ and is denoted by $|u|$. In
particular, $|\varepsilon| = 0$. If $a \in A$ then $|u|_a$ denotes the
number of occurrences of the letter $a$ in the word $u$. We call {\it
infinite words} the elements of $A^{\NN}$ and we set $A^{\infty}
= A^{\NN} \cup A^*$. Let $x \in A^{\infty}$ and $y \in  A^*$. We say
that $y$ is a {\it factor} of $x$ if there exist $u \in  A^{*}$ and  $v \in  A^{\infty}$ such that
$x=uyv$. In particular if $u=\varepsilon$ then $y$ is a {\it prefix} of
$x$ and if $v=\varepsilon$ then $y$ is a {\it suffix} of $x$.
An infinite word $x=(x_n ; n\in \NN)$ of $A^\NN$ is called {\it eventually periodic}
if there exist two words $u \in A^*$ and $v \in A^+$ such that $x =
uvvv\dots$.

The {\it complexity} function of an infinite word $x$ is the function
$P_x : \NN \rightarrow \NN$ where $P_x (n)$ is the number of factors of length $n$ of $x$.

Let $A$, $B$ and $C$ be three alphabets. A {\it morphism} $f$ is a
map from $A$ to $B^*$. It induces by concatenation a map from
$A^*$ to $B^*$. If $f (A)$ is included in $B^+$, it induces a map
from $A^{\NN}$ to $B^{\NN}$. All these maps are also written $f$.


To a morphism $f : A\to B^*$ is associated the matrix
$M_{f} = (m_{i,j})_{i\in  B  , j \in  A  }$ where $m_{i,j}$ is the
number of occurrences of $i$ in the word $f(j)$. If $g$ is a morphism
from $B$ to $C^*$ then we can check we have $M_{g\circ f} = M_g M_f$.

\subsection{Sturmian words and Sturmian morphisms}

Let $A$ be a finite alphabet. An infinite word $x\in A^{\NN}$ is {\it Sturmian} if for all $n \in
\mathbb{N}, P_x(n) = n+1$. Since $P_x (1) = 2$, we can suppose
$A=\{0, 1\} $ (see \cite{Lo} for more informations about these words).

A morphism $f$ from $A$ to $A^*$ is {\it Sturmian} if for all Sturmian
word $x$ the word $f(x)$ is Sturmian. A morphism $f$ is {\it locally Sturmian} if
there exists at least a Sturmian word $x$ such that $f(x)$ is
Sturmian. We call $\St$ the semigroup generated by the morphisms $E$,
$\varphi$, and $\widetilde{\varphi}$ defined by
\begin{center}
\begin{tabular}{llllllllllllll}
$E:$ & $A^*$ & $\longrightarrow $ & $A^*$ & & $\varphi :$ & $A^*$ &
 $\longrightarrow $ & $A^*$ & & $\widetilde{\varphi}:$ & $A^*$ &
 $\longrightarrow $ & $A^*$ \\
     & $0$   & $\longmapsto $     & $1$   & &             & $0$   &
 $\longmapsto $     & $01$  & &                        & $0$   &
 $\longmapsto $   & $10$ \\
     & $1$   & $\longmapsto $     & $0$   & &             & $1$   &
 $\longmapsto $     & $0$   & &                        & $1$   &
 $\longmapsto $   & $0$
\end{tabular}
\end{center}

\begin{theo}
\cite{BS,MS}
\label{BS93}
The following three conditions are equivalent
\begin{enumerate}
\item $ f \in \St $;
\item $ f $ locally Sturmian;
\item $ f $ Sturmian.
\end{enumerate}
\end{theo}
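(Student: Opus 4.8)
The plan is to prove the cycle of implications $(1)\Rightarrow(3)\Rightarrow(2)\Rightarrow(1)$; the last one carries essentially all the content.

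For $(1)\Rightarrow(3)$, since Sturmian morphisms are plainly closed under composition it suffices to see that each generator $E,\varphi,\widetilde{\varphi}$ is Sturmian. The morphism $E$ merely exchanges the two letters, so $P_{E(x)}=P_x$ and $E(x)$ is Sturmian whenever $x$ is. For $\varphi$ and $\widetilde{\varphi}$ I would invoke the classical description of Sturmian words as the binary words that are simultaneously aperiodic (not eventually periodic) and balanced (any two factors of equal length have numbers of $1$'s differing by at most one; see \cite{Lo}): one checks that $\varphi$ and $\widetilde{\varphi}$ send balanced words to balanced words, because every factor of $\varphi(x)$ is the $\varphi$-image of a factor of $x$ up to a prefix and a suffix of bounded length, and that they preserve aperiodicity, since an eventually periodic image would desubstitute to an eventually periodic preimage. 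Then $(3)\Rightarrow(2)$ is immediate, Sturmian words existing (e.g.\ the fixed point of $\varphi$).

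For $(2)\Rightarrow(1)$ I would argue by induction on $\|f\|:=|f(0)|+|f(1)|$. Fix a Sturmian word $x$ with $y:=f(x)$ Sturmian. Two reductions come first. If $f$ erased a letter then $y$ would be a one-sided infinite power of a single word, hence eventually periodic, contradicting that $y$ is Sturmian; and if $f(0)=f(1)$ then $y=f(0)f(0)f(0)\cdots$ would be periodic. So $f$ is non-erasing with $f(0)\neq f(1)$, and the base case $\|f\|=2$ is immediate: $f(0)$ and $f(1)$ are then distinct letters, so $f$ is $E$ or the identity $E\circ E$, both in $\St$. For the inductive step assume $\|f\|\geq 3$. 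As $y$ is balanced and aperiodic it contains exactly one of the factors $00$, $11$; composing $f$ on the left with $E$ if necessary — which alters neither $\|f\|$ nor the hypothesis, and for which $E\circ f\in\St$ gives $f=E\circ(E\circ f)\in\St$ — we may assume $y$ contains no $11$. A word avoiding $11$ lies in the image of $\varphi$ if it begins with $0$ and in the image of $\widetilde{\varphi}$ if it begins with $1$; let $g\in\{\varphi,\widetilde{\varphi}\}$ be the one that applies, so that $y=g(y')$ with $y'$ again Sturmian by the structure theory of Sturmian words (\cite{Lo}). One now factors $f$ through $g$: after replacing $f$ by a conjugate morphism if necessary — which preserves local Sturmian-ness and, crucially, membership in $\St$ — the block decomposition of $y=f(x_0)f(x_1)\cdots$ dictated by $g$ respects the factor boundaries, so $f':=g^{-1}\circ f$ is a well-defined morphism with $f=g\circ f'$. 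Then $f'$ carries a Sturmian word to $y'$, hence is locally Sturmian, and $\|f\|=\|f'\|+|f'(0)f'(1)|_0>\|f'\|$, the letter $0$ occurring in $f'(0)f'(1)$ since otherwise $y'$ would be a power of the letter $1$. By the induction hypothesis $f'\in\St$, so $f=g\circ f'\in\St$.

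The main obstacle is exactly this factorization step: arranging that the $g$-block decomposition of $y$ is compatible with the decomposition $y=f(x_0)f(x_1)\cdots$, so that $g^{-1}\circ f$ is genuinely a morphism, and verifying that the desubstituted witness stays Sturmian. This is where one must use that $x$ itself is Sturmian — its rare letter is never doubled, its bispecial factors are constrained — choose the conjugating word with care, dispose of the degenerate cases $|f(0)|=1$ or $|f(1)|=1$, and appeal to the stability of $\St$ under conjugation; all of the combinatorial bookkeeping of the proof lives there.
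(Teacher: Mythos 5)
This theorem is not proved in the paper at all: it is quoted from Berstel--S\'e\'ebold \cite{BS} and Mignosi--S\'e\'ebold \cite{MS}, so there is no in-paper argument to compare against. Judged on its own terms, your skeleton $(1)\Rightarrow(3)\Rightarrow(2)\Rightarrow(1)$ is the right one, and the desubstitution induction for $(2)\Rightarrow(1)$ is indeed the route taken in the literature (it is essentially the ``Rauzy rules'' argument of \cite{MS}). But the proposal has a genuine gap exactly where you flag one: the factorization $f=g\circ f'$ with $g\in\{\varphi,\widetilde{\varphi}\}$. Knowing $y=g(y')$ does not give you a morphism $f'$ unless each block $f(x_n)$ is a concatenation of $g$-blocks, i.e.\ unless the $g$-block boundaries of $y$ fall at the $f$-image boundaries; this fails in general (e.g.\ when $f(0)$ ends with $0$ and $f(1)$ begins with $1$, a block $01$ straddles the cut). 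Repairing this by passing to a conjugate of $f$ is the standard fix, but it is precisely the combinatorial heart of the theorem, and your appeal to ``membership in $\St$ is preserved under conjugation'' is circular at this stage of the induction: closure of $\St$ under conjugation is itself a nontrivial fact that must be proved separately (by its own induction on the generators) before it can be used here. Writing ``all of the combinatorial bookkeeping of the proof lives there'' concedes that the decisive step is missing rather than supplying it.

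A secondary weakness is in $(1)\Rightarrow(3)$. The observation that every factor of $\varphi(x)$ is a $\varphi$-image of a factor of $x$ up to bounded prefixes and suffixes does not by itself yield $1$-balance of $\varphi(x)$: comparing two equal-length factors $u=p\,\varphi(u')\,s$ and $v=p'\,\varphi(v')\,s'$, the words $u'$ and $v'$ need not have equal length and the boundary corrections a priori only give a balance constant larger than $1$. One needs either the explicit action of $\varphi$ on mechanical words $s_{\alpha,\rho}$ (as in \cite{Lo}), or a finer case analysis of the boundary letters, to recover the exact constant $1$. So both the easy direction and the hard direction require more than the sketch provides, and the hard direction is missing its central lemma.
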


\subsection{Words with Sturmian erasures}

\label{defg}

Let $A_3 = \{0, 1,2 \}$ and
let $x$ be a infinite word of $A_3^{\NN}$. For $i\in A_3$ we denote $\pi_i : A_3 \to A_3^* $
the morphism defined by $\pi_i (j) = j$ if $j\in A_3 $ with $j\not = i$
and $\pi_i (i) = \varepsilon$.

\begin{defi}
An infinite word $x\in A_3^{\NN}$ is called {\rm word with Sturmian erasures} if and
 only if the word
 $\pi_i(x)$ is a Sturmian word for all $i\in A_3$.
We say \linebreak $f : A_3 \longrightarrow A_3^*$ is a {\rm morphism with Sturmian
 erasures} if $f (x)$ is a word with Sturmian erasures for all words
 $x\in A_3^{\NN}$ with
 Sturmian erasures.
\end{defi}

We call $\WSE$ the set of words with Sturmian erasures and $\MSE $ the
set of morphisms with Sturmian erasures. We remark $\MSE$ is a monoid for the composition law of
morphism. The image of a Sturmian word by a morphism with Sturmian
erasures is a word with Sturmian erasures. Hence $\WSE$ is not empty.

\medskip

{\bf Example 1.} Let $g : A_3 \rightarrow A_3^*$ be the morphism defined
by : $g(0)=02$, $g(1)=10$ and $g (2) = \varepsilon$. Let $F_0 = 0$ and for $n\geq 0$ $F_{n+1} = \varphi (F_n)$. Let $F\in \{ 0,1 \}^{\NN}$ be the unique fixed point of $\varphi$ in
$\{ 0,1 \}^{\NN}$ (see \cite{Qu}). Then for each $n\geq 0$ $F_n$ is a prefix of $F$, and we have $F= 0100101001001...$. This word is called the {\it Fibonacci word} (remark that $|F_{n+2}| = |F_{n+1}| + |F_n|$, $n\geq 0$). It is a Sturmian
word. From Theorem \ref{BS93} we deduce that
$$
g (F) = 0210020210021002021002021002100202100210 \dots
$$
is a word with Sturmian erasures. Hence $\WSE$ is not empty.

Let $x\in
A_3^{\NN}$ be a word with Sturmian erasures. The word $\pi_2 (x)$ is a
Sturmian word and $g\circ \pi_2 = g$. Moreover $\pi_2 \circ g_{| \{ 0,1\} }$ is a Sturmian
morphism. Hence
$$
\pi_2 \circ g (x)
=
\pi_2 \circ g \circ \pi_2 (x)
=
\pi_2 \circ g_{ | \{ 0,1 \}} ( \pi_2 (x))
=
g_{ | \{ 0,1 \}} ( \pi_2 (x))
$$
is a Sturmian word. We can also show that $\pi_0 \circ g (x) $ and
$\pi_1 \circ g (x) $ are words with Sturmian erasures. Hence, $g$ is a
morphism with Sturmian erasures and $\MSE$ is not empty.

\section{Proofs of points (\ref{union}) et (\ref{locally}) of Theorem
\ref{maintheo} }

We denote by $\MSE^\varepsilon$ the set of morphisms of $\MSE$ such that there exists $l \in A_3$ with
 $f(l)=\varepsilon$. We will prove that $\MSE$ is the union of
 $\MSE^\varepsilon$ with the set of permutations on $A_3$. This last set is
 generated by

\begin{center}
\begin{tabular}{llllllllllllllllllll}
$E_0:$ & $A_3^{*}$ & $\longrightarrow $ & $A_3^{*}$ & $E_1:$ & $A_3^{*}$
 & $\longrightarrow $ & $A_3^{*}$ & $E_2:$ & $A_3^{*}$ &
 $\longrightarrow $ & $A_3^{*}$\\

       & $0$       & $\longmapsto$&$0$       &        & $0$       &
 $\longmapsto $       & $2$       &        & $0$       & $\longmapsto $ & $1$ \\

       & $1$       & $\longmapsto$&$2$       &        & $1$       &
 $\longmapsto $     & $1$         &        & $1$       & $\longmapsto $ & $0$\\

       & $2$       & $\longmapsto$&$1$       &        & $2$       &
 $\longmapsto $     & $0$         &        & $2$       & $\longmapsto $ & $2$
\end {tabular}
\end{center}

We denote by
 $\MSE_i$, $i \in A_3$, the set of morphisms with Sturmian erasures such
 that $f(i)=\varepsilon$. We have $\MSE^\varepsilon = \MSE_0 \cup
 \MSE_1 \cup \MSE_2$.

\subsection{Proof of the point (\ref{locally}) of Theorem \ref{maintheo}}

We start with the following proposition.

\begin{prop}
\label{local}
If a morphism $f: \{ 0,1 \} \rightarrow A_3^*$ maps a Sturmian word
 defined on $\{ 0,1 \}^{\NN}$ into a
 word with Sturmian erasures then it maps any Sturmian word into a word with Sturmian erasures.
\end{prop}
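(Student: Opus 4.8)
Let $f:\{0,1\}\to A_3^*$ be a morphism that sends some particular Sturmian word $y\in\{0,1\}^{\NN}$ to a word with Sturmian erasures. I want to show $f(x)$ has Sturmian erasures for \emph{every} Sturmian word $x\in\{0,1\}^{\NN}$. The idea is to reduce everything to the two-letter situation by composing with the erasing morphisms $\pi_i$ and invoking Theorem~\ref{BS93}. Fix $i\in A_3$. The composition $\pi_i\circ f:\{0,1\}\to\{0,1\}^*$ is an ordinary two-letter morphism (after identifying the two-element image alphabet $A_3\setminus\{i\}$ with $\{0,1\}$ via a permutation, which does not affect being Sturmian). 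By hypothesis $\pi_i(f(y))$ is Sturmian; hence $\pi_i\circ f$ is a \emph{locally} Sturmian morphism, so by Theorem~\ref{BS93} it is Sturmian, i.e. $\pi_i\circ f(x)$ is Sturmian for \emph{every} Sturmian $x$. Doing this for each of the three values $i\in A_3$ shows $\pi_i(f(x))$ is Sturmian for all $i$, which is exactly the statement that $f(x)$ is a word with Sturmian erasures.

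**Points needing care.** There is one genuine subtlety hidden in the phrase ``$\pi_i\circ f$ is a two-letter morphism''. If $f(0)$ and $f(1)$ together use only letters from $A_3\setminus\{i\}$ after applying $\pi_i$ but the \emph{image} $\pi_i(f(y))$ happens to be eventually periodic or too short, one must make sure it really is an infinite word on a two-letter alphabet so that Theorem~\ref{BS93} applies; but $\pi_i(f(y))$ is assumed Sturmian, hence infinite and aperiodic and on exactly two letters, so this is automatic. A second point: the erasing morphism $\pi_i$ restricted to $A_3\setminus\{i\}$ is just a bijective relabelling onto $\{0,1\}$, and relabelling the alphabet of a word clearly preserves the property of being Sturmian (the complexity function is unchanged); so identifying $\pi_i\circ f$ with a genuine endomorphism of $\{0,1\}^*$ is legitimate. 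Finally, $\pi_i\circ f$ need not be ``non-erasing'': it could send a letter to $\varepsilon$ if, say, $f(0)=i$. In that degenerate case $\pi_i\circ f$ is not literally a substitution inducing a map on infinite words, but then $\pi_i(f(x))=\pi_i\circ f(x)$ is determined letter-by-letter from $x$ anyway, and since $\pi_i(f(y))$ is Sturmian the other letter's image must be a nonempty word, so $\pi_i\circ f$ agrees with a non-erasing two-letter morphism up to a constant prefix/suffix adjustment; one checks directly that being locally Sturmian still forces it to be Sturmian. (Alternatively one simply notes that this degenerate situation is handled by point~(\ref{locally}) of the main theorem, but to keep the argument self-contained I would dispose of it by hand.)

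**Main obstacle.** The heart of the matter is nothing more than Theorem~\ref{BS93} (the Berstel--S\'e\'ebold--Mignosi characterization), so there is no deep new idea to find; the only real work is the bookkeeping of the previous paragraph, i.e. checking that for each $i$ the composite $\pi_i\circ f$ genuinely falls under the hypotheses of Theorem~\ref{BS93} even in the boundary cases where $f$ sends a letter to a word containing or equal to $i$, or of length $\le 1$. Once that is in place, the three applications of Theorem~\ref{BS93} (one per value of $i$) assemble immediately into the conclusion, since ``word with Sturmian erasures'' is by definition the conjunction of ``$\pi_i(\,\cdot\,)$ is Sturmian'' over the three letters $i$.
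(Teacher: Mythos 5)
Your proposal is correct and follows exactly the paper's argument: for each $i\in A_3$, compose $f$ with $\pi_i$ and a relabelling of $A_3\setminus\{i\}$ onto $\{0,1\}$, observe that the resulting two-letter morphism is locally Sturmian because its image of the given word is Sturmian, and conclude from Theorem~\ref{BS93} that it is Sturmian. The extra care you take over degenerate cases (an erased letter, relabelling) is reasonable bookkeeping that the paper leaves implicit, but it is the same proof.
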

\begin{proof}
Let $x\in \{ 0,1 \}^{\NN}$ be a Sturmian word and $f: \{ 0,1 \} \rightarrow A_3^*$ be a
 morphism such that $f(x)$ is a word with Sturmian erasures.

Let $i$ be a letter of $A_3$ and $\overline{E} : A_3 \rightarrow \{ 0,1 \}^*$
 be a morphism such that \linebreak $\{ 0,1 \} = \{ \overline{E} (a) ; i\not = a, \ a\in A_3\}$.
Then $\overline{E} \circ \pi_i \circ f (x)$ is Sturmian and \linebreak $\overline{E} \circ \pi_i \circ f : \{ 0,1 \} \rightarrow \{ 0,1 \}^*$ is a locally Sturmian morphism. Hence, from Theorem \ref{BS93} it is Sturmian. It follows that for every Sturmian word $y$, $f(y)$
is a word with Sturmian erasures. This ends the proof.
\end{proof}

Now we prove the point (\ref{locally}) of Theorem \ref{maintheo}.
Let $f : A_3 \to A_3^*$, such that $f(i) = \varepsilon$ for some $i\in
A_3$, and $x\in A_3^{\NN}$ be a word with
 Sturmian erasures such that $f (x)$ is a word with Sturmian erasures.
We remark that we have $f\circ \pi_i (x) = f (x)$ and that $\pi_i (x)$
 is a Sturmian word.

We can suppose $A_3\setminus \{i\} = \{ 0,1 \}$. Hence the morphism $f\circ {\pi_i}_{|\{ 0,1 \}}$ satisfy the
 hypothesis of Proposition \ref{local}. Consequently if $y$ is a word
 with Sturmian erasures then $f (y) = f\circ \pi_i (y) = f\circ {\pi_i}_{|\{ 0,1 \}} (\pi_i (y)) $ is a word with Sturmian erasures.
\hfill $\Box$

\medskip

{\bf Example 2.} We can remark that there exist morphisms $f : A_3 \rightarrow A_3^*$
 such that for some word $x\in \WSE $ we have $f(x)\in \WSE$ but
 $f $ is not a morphism with Sturmian erasures.

For example let $F$ be the Fibonacci word, $f$ be defined by $f(0) = 0$, $f(1) = 1$ and $f(2) =
 012$, $g : A_3 \rightarrow A_3^*$ be defined
 by $g(0) = 01$, $g(1) = 02$ and $g (2) = \varepsilon$, and, $h : A_3 \rightarrow A_3^*$ be defined
 by $h(0) = 02$, $h(1) = 10$ and $h (2) = \varepsilon$.

As in Example 1 we can prove that $g$, $h$ and $f\circ g$ are morphisms
with Sturmian erasures and consequently $x = g (F)$ and $ f (x) = f\circ
g (F)$ are words with Sturmian erasures.

But we remark that $f\circ h(F)$ is not a word with Sturmian erasures. Indeed
 001210 is a prefix of $f(y)$ and 00110 is a prefix of $w = \pi_2 f (y)$. Consequently $P_w (2)=4$ and $w$ is not a Sturmian word.

\subsection{Proof of the point (\ref{union}) of Theorem \ref{maintheo}}
\label{almost}

We need the
following lemma that follows from Theorem \ref{BS93} and the fact that the determinant of the
 matrices associated to $\varphi$, $\widetilde{\varphi}$ and $E$ belong to $\{-1, 1\}$.

\begin{lemma}
\label{ch5.160}
Let $M$ be the matrix associated to the Sturmian morphism $f$. Then
 $\det M =\pm 1$.
\end{lemma}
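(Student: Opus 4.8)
The plan is to use Theorem \ref{BS93}, which tells us that every Sturmian morphism is a composition of $E$, $\varphi$ and $\widetilde{\varphi}$. The statement then reduces to checking that each of these three generators has an associated matrix with determinant $\pm 1$, and that this property is preserved under composition. For composition this is immediate: if $f$ and $g$ are Sturmian morphisms on $\{0,1\}$, then $M_{g\circ f} = M_g M_f$ as recalled in the section on words, morphisms and matrices, so $\det M_{g\circ f} = \det M_g \cdot \det M_f$, and a product of elements of $\{-1,1\}$ stays in $\{-1,1\}$.

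So the work is to compute the three matrices explicitly. First I would write down
\[
M_E = \begin{pmatrix} 0 & 1 \\ 1 & 0 \end{pmatrix}, \qquad
M_\varphi = \begin{pmatrix} 1 & 1 \\ 1 & 0 \end{pmatrix}, \qquad
M_{\widetilde{\varphi}} = \begin{pmatrix} 1 & 1 \\ 1 & 0 \end{pmatrix},
\]
reading off each column from the images $E(0)=1$, $E(1)=0$; $\varphi(0)=01$, $\varphi(1)=0$; $\widetilde{\varphi}(0)=10$, $\widetilde{\varphi}(1)=0$. Their determinants are $-1$, $-1$, $-1$ respectively, all in $\{-1,1\}$. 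Then an induction on the length of a representation of $f$ as a product of generators, using the multiplicativity of the determinant, gives $\det M_f \in \{-1,1\}$ for every $f \in \St$, hence for every Sturmian morphism by Theorem \ref{BS93}.

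There is essentially no obstacle here; the only point requiring a little care is the direction of the equivalence in Theorem \ref{BS93} that is being invoked, namely that a Sturmian morphism lies in $\St$ (the implication from (3) to (1)), so that it genuinely is a finite product of the generators and the inductive argument applies. One should also note that the three generators are morphisms on a two-letter alphabet, so all matrices in sight are $2\times 2$ and the bookkeeping is trivial; the argument does not need the more delicate combinatorial properties of Sturmian words at all, only the algebraic fact that $\St$ is generated by these three maps together with multiplicativity of $\det$.
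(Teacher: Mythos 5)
Your proof is correct and follows exactly the paper's argument: the authors also derive the lemma from Theorem \ref{BS93} combined with the observation that the matrices of $E$, $\varphi$ and $\widetilde{\varphi}$ have determinant in $\{-1,1\}$, together with multiplicativity of the determinant under composition. Your explicit matrices and determinants are all correct, and your remark that one needs the implication ``Sturmian $\Rightarrow f\in\St$'' is the right point of care.
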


Let us prove the point (\ref{union}) of Theorem \ref{maintheo}. This
proof is due to D. Bernardi.

Let $f$ be a morphism of $ \MSE$.
Let $i\in A_3$ and $\overline{E} : A_3 \setminus \{ i \} \rightarrow
\{ 0,1 \}^*$ be a morphism such that $\{ 0,1 \} = \{ \overline{E} (a) ; a\not = i , \ a\in A_3 \}$. We set $h=\overline{E} \circ \pi_i \circ f\circ g_{|\{ 0,1 \}}$ where $g : A_3 \rightarrow A_3^*$ is the
 morphism defined by: $g(0)=02$, $g(1)=12$ and $g (2) = \varepsilon$.

As the letter $i$ does not appear in the images of $\pi_i \circ f$, we
 consider $\pi_i \circ f$ as a morphism from $A_3$ to $(A_3 \setminus \{
 i \})^*$. We set $M_{\pi_i \circ f} = (u, v, w)$ where $u$, $v$ and $w$
 are column vectors belonging to $\mathbb{R}^2$. We recall $g$ is a morphism with Sturmian erasures (see Example 1 of the subsection \ref{defg}). Hence the morphism $h$ is Sturmian and we have $M_h=(u+w, v+w)$. From Lemma \ref{ch5.160}

$$
\det (u, v) + \det(u, w) + \det (w, v)
=
\det (u+w, v+w)
=
\pm 1 .
$$

We do the same with $g$ being one of the two following morphisms :
($0\longmapsto01$, $1\longmapsto12$, $2\mapsto \varepsilon$) and ($0\longmapsto02$,
 $1\longmapsto01$, $2\mapsto \varepsilon$). We obtain finally

$$
(1) \ \ \det (u, v)+\det (u, w) + \det (v, w) = \pm1.
$$
$$
(2) \ \ \det (u, v) + \det (w, u) + \det (w, v) = \pm1.\\
$$
$$
(3) \ \ \det (u, v) + \det (u, w) + \det (w, v) = \pm1.
$$

The combinations of the equations (1) and (2), (2) and (3), and, (3) and
 (1) imply respectively that $\det (u, v)$, $\det (w, u)$ and $\det (w,
 v)$ belong to $\{ -1,0,1 \}$.
From (1), one of three determinants $\det (u, v)$, $\det (u, w)$ or
 $\det (v, w)$ is different from $0$.

We suppose $\det (u, v) \neq 0$ (the other cases can be treated in the
 same way). The set $\{ u, v\}$ is a base of $\mathbb{R}^2$, hence there
 exist two real numbers $a$ and $b$ such that $w=au+bv$. We have $a = \det (w,v)/\det (u,v)$ and $b = \det (w,u) /\det (v,u) $.
Moreover from (1) and (2) we see that $\det (u,w) + \det (v,w)$ and
 $-(\det (u,w) + \det (v,w)) = \det (w,u) + \det (w,v)$ belong to $\{ \det (v,u) - 1 , \det (v,u) + 1\}$ which is equal to $\{ 0,2\}$ or $\{ -2 , 0 \}$. Consequently
$\det (u,w) + \det (v,w) = 0$. Hence $a=b$ and $w = a(u+v)$. The vector
 $w$ is the column of the matrix of a morphism therefore it has
 non-negative coordinates which implies that $a$ is non-negative.

Suppose $a>0$. Then $\det (w,v)$ and $\det (u,v)$ are positive and have the same
 sign. Hence one of the equations (2) or (3) is equal to -3 or 3 which
 is not possible. Consequently $a=0$ and $w=0$.

Therefore for all $i\in A_3$ the matrix $M_{\pi_i \circ f} = (m_i (c,d))_{c\in A_3 \setminus \{ i \} ,  d\in A_3}$ has a column  $(m_i (c,d_i))_{c\in A_3\setminus \{ i \}}$
 with entries equal to 0.

Two cases occurs.

1- There exists $i, j \in A_3$ ($i\neq j$) such that $d_i=d_j$. In this
 case we easily check that $f (d_i) = \varepsilon$. Consequently $f$
 belongs to $\MSE^\varepsilon $.

2- The sets $\{d_0, d_1, d_2\}$ and $A_3$ are equal. In this case we can
 check that $f$ is a permutation.
\hfill $\Box$

\section{Prime morphisms}

\subsection{Some technical definitions}

Let $A$ be an alphabet and $f : A\rightarrow A^{*}$ be a
morphism. A letter $a$ is called {\it $f$-nilpotent}  if there exists an
integer $n$ such that $f^n(a) = \varepsilon$ (if it is not ambiguous we
will say it is nilpotent). The set of $f$-nilpotent letters is denoted
by ${\mathcal N}_f$. We call ${\mathcal P}_f^{'}$ the set of letters $a$
such that there exists an integer $n$ satisfying $\pi_{{\mathcal N}_f}(f^n(a)) = a$
   where $\pi_{{\mathcal N}_f} (b) = \varepsilon $ if ${b\in \mathcal N}_f$ and $b$ otherwise. The set of such letters is denoted by ${\mathcal P}_f^{'}$.

We say the letter $a$ is {\it $f$-permuting} if there exists an integer $n$ such that \linebreak $f^n(a) \in ({\mathcal N}_f \cup {\mathcal P}_f^{'})^* \setminus {{\mathcal N}_f}^*$. We denote by ${\mathcal P}_f$ the set of such letters.
We remark that ${\mathcal P}_f^{'}$ is included in ${\mathcal P}_f$.

A letter $a$ is called {\it $f$-expansive}, or expansive when the
context is clear, if it is neither nilpotent
nor permuting.
We remark the letter $a\in A$ is $f$-expansive if and only if
$\lim_{n\rightarrow +\infty} |f^n (a)| = +\infty $ and it is $f$-permuting if and only if the sequence $( |f^n (a)| ; n\in \NN )$ is bounded and is never equal to 0.

The morphism $f$ is {\it nilpotent} if $f (A)$ is included in ${\mathcal
N}_f^{*}$, i.e., if there exists an integer $n$ such that $f^n (a) =
\varepsilon $ for all $a\in A$. A morphism $f$ is called {\it expansive} if
there exists a $f$-expansive letter. A morphism $f$ is a {\it unit} if
it is neither nilpotent nor expansive. In others words if
$f (A)$ is included in $({\mathcal N}_f \cup {\mathcal P}_f)^{*}$.

Let $\M$ be a monoid of morphisms. A morphism $f\in \M$ is said to be
 {\it prime} in $\M$ if for any
 morphisms $g$ and $h$ in $\M$ such that $f=g \circ h$, then $g$ or $h$
 is a unit of $\M$.
We say that $f$ is of {\it degree} $n$ in $\M$, $n \in \mathbb{N}$, if any
 decomposition of $f$ into a product of prime and unit morphisms of $\M$
 contains at least $n$ prime morphisms and there exists at least one decomposition of $f$ into
 prime and unit morphisms of $\M$ containing exactly $n$ prime morphisms.

The set of prime morphisms in ${\rm St}$ is $\{ f\circ g \circ h ; f,h\in \{
Id , E \}, \ g\in \{ \varphi , \widetilde{\varphi } \} \}$.

\subsection{Some conditions to be a prime morphism}
In the sequel we need the following morphisms which are
extensions to the alphabet $A_3$ of the morphisms $\varphi$ and $\widetilde{\varphi}$:

\begin{center}
\begin{tabular}{llllllll}
$\varphi_1:$ & $A_3^{*}$ & $\longrightarrow $ & $A_3^{*}$ \ , & \hspace{1cm} $\widetilde{\varphi_1}:$ & $A_3^{*}$ & $\longrightarrow$ & $A_3^{*}$\\

             & $0$       & $\longmapsto $     & $01$      &
      & $0$       & $\longmapsto $    & $10$ \\

             & $1$       & $\longmapsto $     & $0$       &
      & $1$       & $\longmapsto $    & $0$  \\

             & $2$       & $\longmapsto $ & $\varepsilon $ &
      & $2$       & $\longmapsto $& $\varepsilon$      .
\end {tabular}
\end{center}

In the next proposition we need the following lemma.

\begin{lemma}
\label{longueur}
Let $g\in \MSE_2$. Then,
$|g(a)| \geq 2$, $|g(a)|_0 + |g(a)|_1 \geq 1$ and \linebreak $|g (01)|_a \geq 1$
for all $a\in \{ 0 ,1 \}$.
\end{lemma}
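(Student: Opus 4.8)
The plan is to use the hypothesis $g\in\MSE_2$ to manufacture, for each $i\in A_3$, a genuine two‑letter Sturmian morphism from $\pi_i\circ g$, and then to read off the three inequalities from two standard facts about Sturmian morphisms: they are non‑erasing, and by Lemma \ref{ch5.160} their matrices have determinant $\pm1$ (hence no zero row and no zero column). First I would fix a word $x\in\WSE$ (such a word exists by Example 1). Since $g$ is a morphism with Sturmian erasures, $g(x)\in\WSE$, so $\pi_i(g(x))$ is a Sturmian word for every $i\in A_3$; moreover $\pi_2(x)$ is Sturmian, and because $g(2)=\varepsilon$ we have $g\circ\pi_2=g$, hence $g(\pi_2(x))=g(x)$.

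Next, for $i\in A_3$ let $\overline E_i:A_3\setminus\{i\}\to\{0,1\}$ be a bijection, viewed as a length‑preserving morphism, and set $h_i=\overline E_i\circ(\pi_i\circ g)_{|\{0,1\}}:\{0,1\}\to\{0,1\}^*$ (this makes sense because $\pi_i$ removes all occurrences of $i$, so $(\pi_i\circ g)(a)\in(A_3\setminus\{i\})^*$ for $a\in\{0,1\}$). Applying $h_i$ to the Sturmian word $\pi_2(x)$ yields $\overline E_i\bigl(\pi_i(g(\pi_2(x)))\bigr)=\overline E_i\bigl(\pi_i(g(x))\bigr)$, which is Sturmian, being a bijective recoding of the Sturmian word $\pi_i(g(x))$ (which uses exactly the two letters of $A_3\setminus\{i\}$, since $P(1)=2$ for a Sturmian word). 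Hence each $h_i$ is locally Sturmian, so by Theorem \ref{BS93} it is Sturmian, i.e. $h_i\in\St$.

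Now I would exploit Lemma \ref{ch5.160}. For each $i$, $\det M_{h_i}=\pm1$, so $M_{h_i}$ has no zero column, i.e. $h_i$ is non‑erasing; and since $\overline E_i$ preserves lengths, $|h_i(a)|=|\pi_i(g(a))|=\sum_{j\neq i}|g(a)|_j$ for $a\in\{0,1\}$. Taking $i=2,1,0$ in turn gives, for every $a\in\{0,1\}$, the three inequalities $|g(a)|_0+|g(a)|_1\geq 1$, $|g(a)|_0+|g(a)|_2\geq 1$ and $|g(a)|_1+|g(a)|_2\geq 1$. The first of these is the second assertion of the lemma. If one had $|g(a)|\leq 1$, then at most one of $|g(a)|_0,|g(a)|_1,|g(a)|_2$ could be nonzero, contradicting the three inequalities above; hence $|g(a)|\geq 2$, which is the first assertion. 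For the last assertion, $\det M_{h_2}=\pm1$ forces $M_{h_2}$ to have no zero row; since $h_2(j)=\pi_2(g(j))$ and $a\in\{0,1\}$, the $a$‑th row of $M_{h_2}$ sums to $|g(0)|_a+|g(1)|_a=|g(01)|_a$, so $|g(01)|_a\geq 1$ for all $a\in\{0,1\}$.

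There is essentially no serious obstacle here; the only step requiring care is the verification that each $h_i$ is legitimately a morphism of $\{0,1\}$ into $\{0,1\}^*$ to which Theorem \ref{BS93} applies, and this is exactly where the facts that $g(x)\in\WSE$ (so that $\pi_i(g(x))$ uses exactly two letters) and $g\circ\pi_2=g$ (so that the Sturmian word $\pi_2(x)$ on $\{0,1\}$ is the relevant input) are used.
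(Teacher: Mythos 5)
Your proof is correct, but it follows a genuinely different route from the paper's. The paper argues directly and elementarily by contradiction: if $|g(a)|\leq 1$, say $g(a)=b$, then $\pi_b\circ g$ erases two of the three letters, so $\pi_b(g(x))$ is a power of a single word, hence periodic and not Sturmian; the same periodicity argument handles $|g(a)|_0+|g(a)|_1=0$ (via $\pi_2\circ g$) and $|g(01)|_a=0$ (the letter $a$ then never occurs in $g(x)$). You instead package all three inequalities as non-vanishing of rows and columns of the matrices of the two-letter morphisms $h_i=\overline E_i\circ(\pi_i\circ g)_{|\{0,1\}}$, which you show are Sturmian via Theorem \ref{BS93} and whose determinants are $\pm1$ by Lemma \ref{ch5.160}. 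Your argument is uniform and systematic (one mechanism yields all three claims, and it even gives the extra inequalities $|g(a)|_0+|g(a)|_2\geq1$ and $|g(a)|_1+|g(a)|_2\geq1$), at the cost of invoking the full characterization of Sturmian morphisms where the paper only needs the observation that erasing all but one letter of the image alphabet produces a periodic word. Both are valid; the paper's is lighter on machinery, yours is arguably cleaner bookkeeping.
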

\begin{proof}
Suppose for $a\in \{ 0,1 \}$ we have $|g(a)| = 1$, for example $g (a) = b$. Then $\pi_b \circ g (x)$ is periodic for all $x\in A_3^{\NN}$. This contradicts the fact that $g$ belongs to $\MSE_2$. If $|g(a)| = 0$ we have the same conclusion. This proves the first part of the lemma.

Suppose $|g(a)|_0 + |g(a)|_1 = 0$. Then $\pi_2 \circ g (x)$ is periodic
 for all $x\in A_3^{\NN}$. This proves the second inequality.

Suppose $|g (01)|_a = 0$, then $a$ does not appear in $g (x)$. This contradicts the fact that $g$ belongs to $\MSE_2$.
\end{proof}

\begin{prop}
\label{condpremier}
Let $i\in A_3$ and $ f \in \MSE_i$. We set $A_3= \{ i,j,k \}$.

1) If $f(j)$ is
 neither a prefix nor a suffix of $f(k)$ and that $f(k)$ is neither a prefix
 nor a suffix of $f(j)$, then $f$ is prime in
 $\MSE_{i}$.

2) Moreover, if $f$ is prime in $\MSE_{i}$ and if we have
$
|f(012)|_j > |f(012)|_k \geq |f(012)|_i,
$
then $f(j)$ is
 neither a prefix nor a suffix of $f(k)$ and $f(k)$ is neither a prefix
 nor a suffix of $f(j)$.
\end{prop}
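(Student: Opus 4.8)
The plan is to treat the two parts separately, since they go in opposite directions.

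For part 1), suppose toward a contradiction that $f = g \circ h$ with $g, h \in \MSE_i$ and neither $g$ nor $h$ a unit. First I would normalize: up to composing on the left and right with permutations (which are units), I may assume $i = 2$, so $f, g, h \in \MSE_2$, and $h$ sends $2$ to $\varepsilon$ (otherwise $f$ would not kill a letter; more carefully, since $f(2) = \varepsilon$ and $h(2)$ is a word in $\{0,1,2\}^*$ on which $g$ acts, one checks $h(2) \in \mathcal N_g^* = \{2\}^*$, and after a permutation normalization $h(2) = \varepsilon$ as well). Now the key point is to look at $f(j)$ and $f(k)$, i.e.\ the images of $0$ and $1$ under $f = g \circ h$. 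Write $h(0) = w_0$, $h(1) = w_1 \in \{0,1\}^*$ (using $h(2)=\varepsilon$), so $f(0) = g(w_0)$, $f(1) = g(w_1)$. Since $h$ is not a unit, it is expansive (a non-nilpotent morphism of $\MSE_2$ that is not a permutation composed with something trivial must be expansive — here I would invoke that $h \in \MSE_2$ forces, by Lemma \ref{longueur}, that $|h(0)|, |h(1)| \geq 2$, so $h$ is certainly expansive). The combinatorial heart is then: because $h$ restricted to $\{0,1\}$ is a Sturmian-type morphism (after applying $\pi_2$ and the coding $\overline E$, as in Proposition \ref{local}), one of $w_0, w_1$ is a prefix of the other and one is a suffix — this is the standard "comparison" property of Sturmian morphisms on a two-letter alphabet, which follows from Theorem \ref{BS93} and the structure of $\St$ (every element of $\St$ has $f(0), f(1)$ comparable as prefixes, up to the $E$-swap). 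Applying $g$ preserves the prefix/suffix relation, so $f(j) = g(w_0)$ and $f(k) = g(w_1)$ would be comparable, contradicting the hypothesis. I would need to also rule out $g$ being the unit in the decomposition, but that is symmetric: if $g$ is not a unit it is expansive, and then I argue directly that $f$ expansive with $f(j), f(k)$ incomparable cannot factor; the cleanest route is to observe that in any $f = g\circ h$ in $\MSE_2$ with both non-units, the images $f(0), f(1)$ are obtained by substituting into a pair of comparable words, so they stay comparable.

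For part 2), I assume $f$ is prime in $\MSE_i$ with $|f(012)|_j > |f(012)|_k \geq |f(012)|_i$, and I want the incomparability conclusion. The natural strategy is the contrapositive of part 1)'s idea: suppose, say, $f(j)$ is a prefix of $f(k)$, so $f(k) = f(j) u$ for some nonempty $u$. I would then try to extract an explicit factorization $f = g \circ h$ with $h$ the elementary morphism $j \mapsto jk$ (or $j \mapsto kj$ in the suffix case), $k \mapsto k$, $i \mapsto \varepsilon$ — essentially $\varphi_1$ or $\widetilde{\varphi_1}$ up to permutation — and $g$ defined by $g(j) = f(j)$, $g(k) = u$, $g(i) = \varepsilon$. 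One checks $g \circ h = f$ directly on letters. The real work is to verify $g \in \MSE_i$: this is where I expect the main obstacle. It requires showing $\pi_a \circ g$ sends words with Sturmian erasures to Sturmian words for each $a$, and for that I would lean on Proposition \ref{local} together with the hypothesis that $f \in \MSE_i$ — the point being that $h = \varphi_1$ (up to permutation) is already known to be in $\MSE$ (Example 1, Theorem \ref{maintheo}(\ref{locally})), and $g$ "inherits" good behavior from $f = g \circ h$ because $h$ is surjective enough at the level of Sturmian words: every Sturmian word (on the relevant two letters) is, up to a shift, the $h$-image of a Sturmian word, so $g$ being locally good on that image forces $g$ globally good via Proposition \ref{local}. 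Finally, the inequality hypothesis $|f(012)|_j > |f(012)|_k \geq |f(012)|_i$ is what guarantees that after splitting off $h$ the residual morphism $g$ is not itself a unit and that $h$ is not a unit — so that the factorization $f = g\circ h$ genuinely contradicts primality of $f$. Thus no such prefix/suffix relation can hold, which is the claim.

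The step I expect to be the genuine obstacle is verifying $g \in \MSE_i$ in part 2): the factorization on letters is mechanical, and the Sturmian comparison lemma in part 1) is classical, but showing the quotient morphism $g$ still maps all words with Sturmian erasures into $\WSE$ needs the surjectivity-of-$h$-on-Sturmian-words argument to be made precise, and one must handle all three projections $\pi_0, \pi_1, \pi_2$ of $g$, using Lemma \ref{longueur} to control degenerate cases (e.g.\ images of length $\leq 1$) and the counting hypothesis to keep $g$ non-trivial.
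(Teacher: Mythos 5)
Your part 1) follows essentially the paper's route and is sound: since $h\in\MSE_2$ is not a unit, $\pi_2\circ h$ restricted to $\{0,1\}$ is a Sturmian morphism other than $Id$ and $E$, so by Theorem \ref{BS93} its decomposition into $\varphi,\widetilde\varphi,E$ contains a rightmost $\varphi_1$ or $\widetilde{\varphi_1}$ factor, forcing one of $h(0),h(1)$ to be a prefix or a suffix of the other; applying $g$ preserves this relation and contradicts the hypothesis on $f$. (Your worry about ``also ruling out $g$ being the unit'' is unnecessary: to prove primality one assumes both factors are non-units and derives a contradiction, and the argument only uses that $h$ is one.)

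Part 2) contains a genuine gap. First, your factorization does not compose: with $h(j)=jk$, $h(k)=k$, $g(j)=f(j)$, $g(k)=u$ one gets $g(h(j))=f(j)u=f(k)$, not $f(j)$. More seriously, any ``elementary'' $h$ of the shape you propose, with one image of length $1$ on $\{j,k\}$, is \emph{not} in $\MSE_i$: by Lemma \ref{longueur} every $h\in\MSE_2$ satisfies $|h(a)|\geq 2$ for $a\in\{0,1\}$, and indeed with $h(k)=k$ the projection $\pi_j\circ h(x)$ is the periodic word $kkk\cdots$ (for the same reason $\varphi_1$ itself is not in $\MSE$, so the appeal to Example 1 does not apply to your $h$). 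Hence your $f=g\circ h$ is not a factorization inside the monoid $\MSE_i$ and cannot contradict primality there. The missing idea is to pad the short image with the erased letter: the paper takes $h(0)=01$, $h(1)=02$, $h(2)=\varepsilon$ and $g(0)=u$, $g(1)=v$, $g(2)=\varepsilon$ where $f(0)=uv$, $f(1)=u$, so that all three projections of $h$ are nondegenerate and $h\in\MSE_2$ can be checked as in Example 1. Once that is in place, $g\in\MSE_2$ follows immediately from point (\ref{locally}) of Theorem \ref{maintheo}, since $g$ erases a letter and maps $h(x)\in\WSE$ to $f(x)\in\WSE$; no surjectivity-of-$h$-on-Sturmian-words argument is needed. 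Finally, ``$g$ is not a unit'' is not automatic from the counting hypothesis: the paper uses Lemma \ref{longueur} and the inequality to get $|g(01)|_j\geq 2$ and then an induction showing $|\pi_2\circ g^n(01)|\geq n+2$, whence $g$ is expansive; that computation has to be supplied, not asserted.
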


\begin{proof}
We only make the proof in the case $i=2$.

1) We suppose $f(0)$ is
 neither prefix nor suffix of $f(1)$ and that $f(1)$ is neither prefix
 nor suffix of $f(0)$.
We proceed by contradiction, i.e., we suppose there exist $ g,h \in \MSE_2$ which are not units such that $f=g \circ h$.

Let $h_1=\pi_2 \circ h$. We have $g \circ h=g \circ h_1$. We define
 $\overline{h}_1 : \{ 0,1 \} \rightarrow \{ 0,1 \}^*$ by $\overline{h}_1 (i) =h_1 (i)$ for
 all $i\in \{ 0,1 \}$. We remark that $\overline{h}_1$ is a Sturmian morphism hence it is a
 product of $\varphi, \widetilde{\varphi}$ and $E$ (Theorem \ref{BS93}). Therefore $h_1$ is a
 product of
$\varphi_1, \widetilde{\varphi}_1$ and $\pi_2 \circ E_2$.

We consider two cases.

Suppose $h_1 \not \in  \{ \pi_2\circ E_2 , \pi_2\circ E_2 \circ E_2 \}$. Then, for example, $h_1$ is equal to
 $h_2 \circ \varphi_1$ where $h_2$ is a product of $\varphi_1, \widetilde{\varphi}_1$ and $\pi_2 \circ E_2$. The other cases ($h_1=h_2 \circ \widetilde{\varphi_1}$ or $h_1=h_2 \circ \varphi_1 \circ \pi_2 \circ E_2$ or $h_1=h_2 \circ \widetilde{\varphi_1} \circ \pi_2 \circ E_2$) can be treated in the same way.
We have
$
f(0)=g\circ h_2 \circ \varphi_1 (0) = g \circ h_2(0)g \circ h_2(1)
\hbox{ and }
f(1)=g \circ h_2(0).
$
which contradicts the hypothesis.

Suppose $h_1 \in \{ \pi_2 \circ E_2  , \pi_2 \circ E_2 \circ E_2 \}$, then $h_1(0)=1$, $h_1(1)=0$ and
 $h_1(2)=\varepsilon$ or $h_1(0)=0$, $h_1(1)=1$ and
 $h_1(2)=\varepsilon$. In both case we easily check that $h$ is a unit of
 $\MSE_2$. This ends the first part of the proof.

\medskip

2) We now suppose $f$ is a prime morphism in $\MSE_{2}$ such that
 $|f(01)|_j > |f(01)|_k \geq |f(01)|_2$, where $A_3 = \{ j,k,2 \}$.

We proceed by contradiction. We suppose $f(1)$ is a prefix
of $f (0)$. The other case can be treated in the same way. There
exist $u$ and $v$ in $A_3^*$ such that $f(0) = uv$ and $f(1) = u$. We define
$g,h : A_3 \rightarrow A_3^*$ by $g(0) = u$, $g(1) = v$, $g(2) =
\varepsilon $, $h(0) = 01$, $h(1) = 02$ and $h (2) = \varepsilon$. We
remark that $h$ is not a unit and $f = g\circ h$.
To end the proof it suffices to show that $g$ is not a unit of
 $\MSE_2$. We start proving $g$ belongs to $\MSE_2$.

Let $x\in \WSE$. As in Example 1 we can prove that $h$ belongs to
 $\MSE_2$. \linebreak Consequently $h(x)$ belongs to $\WSE$. Moreover $f(x) = g(h(x))$ belongs to $\WSE$.\linebreak From the point (\ref{locally}) of Theorem \ref{maintheo}
 it comes that $g$ belongs to $\MSE_2$. From Lemma
 \ref{longueur} we have $|f (01)|=|g (010)| \geq 6$. Consequently $|f (01)|_0 + |f (01)|_1 + |f (01)|_2 \geq 6$. From the hypothesis it comes that $|g(010)|_j = |f(01)|_j \geq 3$. Hence \linebreak $2|g(01)|_j -|g(1)|_j \geq 3$ and $|g(01)|_j \geq 2$.

Now we prove by induction that for all $n\in \NN$ we have
$$
|\pi_2 \circ g^n (01) | \geq n+2,
\
|\pi_2 \circ g^n (0) | \geq 1,
\hbox{ and }
|\pi_2 \circ g^n (1) | \geq 1.
$$

This is true for $n=0$. We suppose it is true for $n\in \NN$. From Lemma
 \ref{longueur} we have
$$
|\pi_2 \circ g^{n+1} (01) |
=
|\pi_2 \circ g^{n} (g(01)) |
\geq
|\pi_2 \circ g^{n} (jjk) |
=
|\pi_2 \circ g^{n} (01)| + |\pi_2 \circ g^{n} (j) |
\geq n+3.
$$
Moreover, from Lemma \ref{longueur} in $g(j)$ occurs a letter $a\in \{ 0,1 \}$. Consequently,
$$
|\pi_2 \circ g^{n+1} (j) |
=
|\pi_2 \circ g^{n} (g(j)) |
\geq
|\pi_2 \circ g^{n} (a) |
\geq 1.
$$
We proceed in the same way for the letter $k$. This concludes the
 induction.
Therefore, it is clear $g$ is expansive. This concludes the proof.
\end{proof}

\section{The monoid $\MSE$ is not finitely generated}

\subsection{Some preliminary results}

To prove the point (\ref{notfinitely}) of Theorem \ref{maintheo} we need the following subset of $\MSE
$. Let $\MSEprime$ be the set of morphisms $f \in \MSE_2$ such that for
some $n\in \NN$
$$
\pi_2 \circ f \in F_n,
\pi_1 \circ f \in G_n
\hbox{ and }
\pi_0 \circ f \in H_n
\hbox{ where } F_n = \{\varphi_1, \widetilde{\varphi_1}\}^n ,
$$
$$
G_n =  E_0 \circ \{\varphi_1, \widetilde{\varphi_1}\} \circ
E_2 \circ \{\varphi_1, \widetilde{\varphi_1}\}^{n-1}
\hbox{ and }
H_n = E_2 \circ E_0 \circ \{\varphi_1,
\widetilde{\varphi_1}\}^{n-1}.
$$
With the two following lemmata we prove that $\MSEprime$ is not
empty. Before we need a new definition and we make some remarks.

Let $u \in \{0, 1\}^*$, $v \in \{0, 2\}^*$
 and $w \in \{1, 2\}^*$ be three words. We say that $u$, $v$ and $w$ {\it intercalate between them} if and only if there exists $x \in A_3^*$ such that $\pi_2(x)=u$, $\pi_1(x)=v$ and $\pi_0(x)=w$.

Let $( u_n)_{n\in \NN}$ be the Fibonacci word : $u_{n+1} = u_n +
u_{n-1}$ for all $n\geq 1$, $u_0 = 0$ and $u_1 = 1$.
We can remark that for all $n\geq 1$ we have
$$
M_{\varphi_1^n} = M_{\widetilde{\varphi}_1^n} = M_{\varphi_1}^n
=
\left[
\begin{array}{lll}
u_{n+1} & u_{n} & 0 \\
u_{n}     & u_{n-1} & 0\\
0 & 0 & 0
\end{array}
\right] .
$$

\begin{lemma}
Let $ n \geq 2$,
$f \in F_n$,
$g \in G_n$
and
$h \in H_n$.
Then, for all $a\in \{ 0,1 \}$ we have
$|f (a)|_0 = |g (a)|_0$,
$|f (a)|_1 = |h (a)|_1$
and
$|g (a)|_2 = |h(a)|_2$.
\end{lemma}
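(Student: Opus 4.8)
The plan is to track the three "color-count" statistics through the recursive structure of the sets $F_n$, $G_n$ and $H_n$, proving the claimed equalities by induction on $n$, and at the base case verifying them by direct inspection. The key observation is that each of $F_n$, $G_n$, $H_n$ is obtained by precomposing a fixed initial morphism with the same tail $\{\varphi_1,\widetilde\varphi_1\}^{n-1}$; so if I write a morphism $f\in F_n$ as $f = f_0\circ \psi$ with $\psi\in\{\varphi_1,\widetilde\varphi_1\}^{n-1}$ and $f_0\in\{\varphi_1,\widetilde\varphi_1\}$, and similarly $g = g_0\circ\psi$ with $g_0\in E_0\circ\{\varphi_1,\widetilde\varphi_1\}\circ E_2$ and $h = h_0\circ\psi$ with $h_0 = E_2\circ E_0$, then the counts $|f(a)|_b$ are entries of $M_f = M_{f_0}M_\psi$, and likewise for $g$ and $h$. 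Since all of $\varphi_1$, $\widetilde\varphi_1$ share the same matrix (displayed in the excerpt just before the lemma), $\psi$ contributes the same matrix $M_{\varphi_1}^{n-1}$ in all three cases; the whole statement then reduces to a comparison of the first two rows/columns of $M_{f_0}$, $M_{g_0}$, $M_{h_0}$.

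Concretely, I would first note that $|f(a)|_0$ is the $(0,a)$ entry of $M_{f_0}M_{\varphi_1}^{n-1}$ and $|g(a)|_0$ is the $(0,a)$ entry of $M_{g_0}M_{\varphi_1}^{n-1}$, so it suffices to show that $M_{f_0}$ and $M_{g_0}$ have the same row $0$ (restricted to columns $0,1$, which is all that matters since $f(a)$ and $g(a)$ for $a\in\{0,1\}$ only involve $\psi$ applied to words over $\{0,1\}$, and the third column of $M_{\varphi_1}^{n-1}$ is zero anyway). Writing out $M_{g_0} = M_{E_0}M_{\varphi_1 \text{ or }\widetilde\varphi_1}M_{E_2}$ and recalling that $E_0$ swaps letters $1,2$ while fixing $0$, and $E_2$ swaps $0,1$ while fixing $2$, one computes that $M_{g_0}$'s row indexed by $0$ equals $M_{\varphi_1}$'s row indexed by... — this is the one genuine computation, and it is a $3\times 3$ matrix product with permutation matrices, so it is short. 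The same bookkeeping gives: row $1$ of $M_{f_0}$ equals row $1$ of $M_{h_0} = M_{E_2}M_{E_0}$-composed appropriately, yielding $|f(a)|_1 = |h(a)|_1$; and row $2$ of $M_{g_0}$ equals row $2$ of $M_{h_0}$, yielding $|g(a)|_2 = |h(a)|_2$. The point is that $E_0$ is designed so that its effect on the $0$-count of $g$ mirrors the $0$-count of $f$ (it carries the "$2$" of $\varphi_1$ into a "$1$" and leaves $0$ untouched after the inner $E_2$), and similarly $E_2\circ E_0$ is designed so that the $1$-count of $h$ mirrors the $1$-count of $f$.

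The main obstacle — though it is bookkeeping rather than conceptual — is getting the composition order and the action of $E_0,E_2$ on letters exactly right, since $G_n$ involves $E_0(\cdots)E_2$ wrapped around an inner $\varphi_1$ or $\widetilde\varphi_1$ and one must be careful that the inner Sturmian letter ($\varphi_1$ or $\widetilde\varphi_1$) may differ from the $f_0$ chosen for $f$; however, since $\varphi_1$ and $\widetilde\varphi_1$ have identical matrices, this choice is immaterial at the level of counts, which is exactly why the statement holds "for all $f\in F_n$, $g\in G_n$, $h\in H_n$" without a matching condition. So the proof reduces to: (i) observe $M_{\varphi_1}=M_{\widetilde\varphi_1}$, hence every element of $F_n$ (resp.\ $G_n$, $H_n$) has the same matrix, call it $M_F$ (resp.\ $M_G$, $M_H$); (ii) compute $M_F = M_{\varphi_1}^n$, $M_G = M_{E_0}M_{\varphi_1}M_{E_2}M_{\varphi_1}^{n-1}$, $M_H = M_{E_2}M_{E_0}M_{\varphi_1}^{n-1}$ explicitly using the displayed form of $M_{\varphi_1}^{n-1}$; (iii) read off that the first row of $M_F$ equals the first row of $M_G$, the second row of $M_F$ equals the second row of $M_H$, and the third rows of $M_G$ and $M_H$ agree — in each case on columns $0$ and $1$, which is what the three asserted identities say. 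Step (ii)–(iii) is a routine finite computation with $3\times3$ matrices and the $2\times2$ Fibonacci matrix, so I would present it compactly rather than expanding every entry.
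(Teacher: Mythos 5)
Your proposal is correct and is essentially the paper's own argument: the paper's proof consists precisely of displaying $M_f=M_{\varphi_1^n}$, $M_g$ and $M_h$ (computed exactly as in your step (ii), using $M_{\varphi_1}=M_{\widetilde\varphi_1}$) and reading off that the relevant rows coincide. Your row-by-row comparison in step (iii) matches the entries $u_{n+1},u_n$ (row $0$ of $M_f$ and $M_g$), $u_n,u_{n-1}$ (row $1$ of $M_f$ and $M_h$) and $u_{n-1},u_{n-2}$ (row $2$ of $M_g$ and $M_h$), so nothing further is needed.
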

\begin{proof}
It suffices to remark that $M_f = M_{\varphi_1^n}$,
$$
M_g
=
\left[
\begin{array}{lll}
u_{n+1} & u_{n} & 0 \\
0       & 0     & 0 \\
u_{n-1}     & u_{n-2} & 0
\end{array}
\right]
\hbox{ and }
M_h
=
\left[
\begin{array}{lll}
0       & 0     & 0 \\
u_{n} & u_{n-1} & 0 \\
u_{n-1}     & u_{n-2} & 0
\end{array}
\right]
.
$$
\end{proof}

\begin{lemma}
\label{intercalate}
Let $f,g$ and $h$ be three morphisms from $A_3$ to $A_3^*$ such that
 $f(a), g(a)$ and $h(a)$ are respectively words on the alphabets $\{ 0 , 1\}$, $\{ 0,2\}$ and $\{ 1,2 \}$ for all $a\in A_3$. Then, $f(a)$, $g(a)$ and $h(a)$ intercalate between them for all $a\in A_3$ if and only if there exists a morphism $\psi : A_3 \to A_3^*$ such that
 $\pi_2 \circ \psi = f$, $\pi_1 \circ \psi = g$ and  $\pi_0 \circ \psi = h$.
\end{lemma}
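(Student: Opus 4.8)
The plan is to prove the two implications separately, reading off in each direction what "intercalate between them" must mean in terms of a common word.

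First I would handle the direction from the morphism $\psi$ to the intercalation property. Suppose such a $\psi$ exists. Fix $a\in A_3$ and set $x = \psi(a)\in A_3^*$. By hypothesis $\pi_2(\psi(a)) = f(a)$, $\pi_1(\psi(a)) = g(a)$ and $\pi_0(\psi(a)) = h(a)$, so $x$ is exactly the witness required by the definition of intercalation: $\pi_2(x) = f(a)$, $\pi_1(x) = g(a)$, $\pi_0(x) = h(a)$. Hence $f(a)$, $g(a)$ and $h(a)$ intercalate between them. This direction is essentially immediate from unwinding the definitions.

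For the converse, assume that for every $a\in A_3$ the words $f(a)$, $g(a)$, $h(a)$ intercalate between them; I would then construct $\psi$ letter by letter. For each $a\in A_3$ choose, by the definition of intercalation, a word $x_a\in A_3^*$ with $\pi_2(x_a) = f(a)$, $\pi_1(x_a) = g(a)$, $\pi_0(x_a) = h(a)$, and define $\psi(a) = x_a$, extending $\psi$ to $A_3^*$ by concatenation. It remains to check that $\pi_2\circ\psi = f$, $\pi_1\circ\psi = g$, $\pi_0\circ\psi = h$ as morphisms. Since the $\pi_i$ are themselves morphisms, for any word $a_1\cdots a_m\in A_3^*$ we have $\pi_i(\psi(a_1\cdots a_m)) = \pi_i(x_{a_1})\cdots\pi_i(x_{a_m})$, and using the chosen property of each $x_{a_j}$ this equals $f(a_1)\cdots f(a_m) = f(a_1\cdots a_m)$ when $i = 2$, and likewise for $i = 1,0$. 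Thus $\psi$ works.

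The only point requiring care — and the step I expect to be the main obstacle in a fully rigorous write-up — is that the choices $x_a$ can indeed be made independently for each letter and then glued: one must note that the alphabet constraints ($f(a)$ on $\{0,1\}$, $g(a)$ on $\{0,2\}$, $h(a)$ on $\{1,2\}$) guarantee $\pi_2,\pi_1,\pi_0$ each kill exactly the "foreign" letter, so that a local witness $x_a$ has no interaction across letter boundaries; the commutation $\pi_i\circ\psi = (\pi_i\circ\psi\text{ on letters})$ extended by the morphism property is what makes the concatenated $x_a$'s reassemble correctly. Once this bookkeeping is in place the lemma follows. \hfill $\Box$
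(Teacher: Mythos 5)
Your proof is correct and follows the same route as the paper's (which simply defines $\psi(a)$ as the intercalating word of $f(a)$, $g(a)$, $h(a)$ for each letter, checks the identities, and leaves the easy direction to the reader). The "point requiring care" you flag is in fact automatic: since each $\pi_i$ is a morphism, $\pi_i(\psi(a_1\cdots a_m))=\pi_i(x_{a_1})\cdots\pi_i(x_{a_m})$ holds with no further argument about letter boundaries.
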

\begin{proof}
For all $a\in A_3$ let $\psi( a)$ be the word obtained intercalating
 $f(a)$, $g(a)$ and $h(a)$. This defines a morphism $\psi : A_3 \to A_3^*$. We can
 check it satisfies $\pi_2 \circ \psi = f$, $\pi_1 \circ \psi = g$ and
 $\pi_0 \circ \psi = h$. The reciprocal is left to the reader.
\end{proof}

\begin{lemma}
\label{prefixsuffix}
For all $n\in \NN^*$, $\varphi_1^n (1)$ is a prefix but not a
 suffix of $\varphi_1^n (0)$. And for all $n\in \mathbb{N}^*\backslash\{1\}$ if $g = E_0 \circ \widetilde{\varphi_1} \circ E_2 \circ \widetilde{\varphi_1}^{n-1}$ then $g(1)$ is a suffix but not a prefix of $g(0)$.

\end{lemma}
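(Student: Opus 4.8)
The plan is to treat the two assertions of the lemma separately, reducing both to elementary combinatorics of the Fibonacci words $F_n=\varphi_1^n(0)$ together with the observation that $\widetilde{\varphi_1}$ is the mirror image of $\varphi_1$. Throughout, for a word $w$ I write $\overline{w}$ for its mirror image, i.e. $\overline{w_1w_2\cdots w_k}=w_k\cdots w_2w_1$.

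\emph{First assertion.} I would start from the identities $\varphi_1^n(0)=\varphi_1^{n-1}(0)\,\varphi_1^{n-1}(1)$ and $\varphi_1^n(1)=\varphi_1^{n-1}(0)$, both valid for $n\ge 1$ and both immediate from $\varphi_1(0)=01$ and $\varphi_1(1)=0$. Together they say that $\varphi_1^n(1)$ is a prefix of $\varphi_1^n(0)$. To see that it is not a suffix, the key remark is that the last letter of $\varphi_1(w)$ is always the letter of $\{0,1\}$ distinct from the last letter of $w$ (because $\varphi_1(0)=01$ ends with $1$ and $\varphi_1(1)=0$ ends with $0$); hence, iterating from $F_0=0$, the last letter of $F_n=\varphi_1^n(0)$ is $0$ for $n$ even and $1$ for $n$ odd. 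As $n$ and $n-1$ have opposite parities, $\varphi_1^n(1)=F_{n-1}$ and $\varphi_1^n(0)=F_n$ end with different letters, so neither can be a suffix of the other.

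\emph{Second assertion.} Here the crucial step is to get a usable closed form for $g(0)$ and $g(1)$. One checks on the letters that $\widetilde{\varphi_1}(\overline{w})=\overline{\varphi_1(w)}$ for every $w\in\{0,1\}^*$ (it suffices to verify it for $w\in\{0,1\}$ and then use that mirroring reverses concatenation), whence $\widetilde{\varphi_1}^{\,k}(a)=\overline{\varphi_1^{k}(a)}$ for every letter $a\in\{0,1\}$. On $\{0,1\}$ the morphisms $E_2$ and $E_0$ restrict to letter-to-letter maps $\sigma$ (the exchange of $0$ and $1$) and $\tau$ ($0\mapsto 0$, $1\mapsto 2$), which commute with mirroring and preserve prefixes and suffixes. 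Pushing all this through the composition $g=E_0\circ\widetilde{\varphi_1}\circ E_2\circ\widetilde{\varphi_1}^{\,n-1}$ yields, for $n\ge 2$ and $a\in\{0,1\}$,
$$
g(a)=\overline{\tau\big(\varphi_1(\sigma(\varphi_1^{n-1}(a)))\big)}.
$$
From this formula both claims follow quickly. First, $\varphi_1^{n-1}(1)=\varphi_1^{n-2}(0)$ is a prefix of $\varphi_1^{n-1}(0)=\varphi_1^{n-2}(0)\varphi_1^{n-2}(1)$ — and this is exactly where the hypothesis $n\ge 2$ is used — so, applying the morphisms $\sigma$, $\varphi_1$ and $\tau$ (each of which preserves prefixes) and then mirroring, $g(1)$ is a suffix of $g(0)$. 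Second, $g(1)$ is not a prefix of $g(0)$ because the two (nonempty) words begin with different letters: the first letter of $g(a)$ is $\tau$ of the last letter of $\varphi_1(\sigma(\varphi_1^{n-1}(a)))$, which by two applications of the last-letter rule of the first part is simply $\tau$ of the last letter of $\varphi_1^{n-1}(a)$; for $a=0$ this is governed by the parity of $n-1$ and for $a=1$ by the parity of $n-2$, hence they differ.

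The only genuinely delicate point is the bookkeeping of the three mirror/exchange operations in the closed form for $g(a)$; the supporting facts (the letterwise identity relating $\widetilde{\varphi_1}$ and $\varphi_1$, the commutation of $\sigma$ and $\tau$ with mirroring, and the rule for the last letter of $\varphi_1(w)$) are routine. It is also worth noting why $n=1$ must be excluded: then $\varphi_1^{n-2}(0)$ is not defined, and one computes $g(0)=0$, $g(1)=20$, so indeed $g(1)$ is not a suffix of $g(0)$ — the argument breaks down correctly, precisely because $\varphi_1^{n-1}(1)$ fails to be a prefix of $\varphi_1^{n-1}(0)$ when $n=1$.
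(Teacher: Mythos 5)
Your proof is correct, and it departs from the paper's in two places worth noting. For the first assertion the prefix claim is handled identically (via $\varphi_1^n(0)=\varphi_1^{n-1}(0)\varphi_1^{n-1}(1)$ and $\varphi_1^n(1)=\varphi_1^{n-1}(0)$), but for the non-suffix claim the paper runs an induction, assuming $\varphi_1^{n+1}(1)$ is a suffix of $\varphi_1^{n+1}(0)$ and peeling off $\varphi_1^n(1)$ to contradict the induction hypothesis; you replace this with a last-letter parity observation (the last letter of $\varphi_1^n(0)$ alternates with $n$, since $\varphi_1(0)$ and $\varphi_1(1)$ end in complementary letters), which is shorter and even shows neither word is a suffix of the other. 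For the second assertion the paper says only that it ``can be achieved in the same way''; you actually supply the argument, and by a different mechanism: the mirror identity $\widetilde{\varphi_1}(\overline{w})=\overline{\varphi_1(w)}$ yields the closed form $g(a)=\overline{\tau\bigl(\varphi_1(\sigma(\varphi_1^{n-1}(a)))\bigr)}$, after which the prefix relation $\varphi_1^{n-1}(1)\preceq\varphi_1^{n-1}(0)$ (here is where $n\ge 2$ enters) is transported through the letter-to-letter maps and the mirror into the desired suffix relation, while the distinct first letters of $g(0)$ and $g(1)$ follow from the same parity rule. I checked your formula against the explicit $g_2(0)=200$, $g_2(1)=0$ of Lemma \ref{existence} and it agrees. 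One small wording quibble: the ``two applications of the last-letter rule'' are really one application of the rule for $\varphi_1$ composed with the complementation effected by $\sigma$; the two complementations cancel, as your conclusion correctly assumes.
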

\begin{proof}
Let $n\in \NN^*$. We have $\varphi_1^{n} (0) = \varphi_1^{n-1} (01) = \varphi_1^{n} (1)  \varphi_1^{n-1} (1)$. Hence $\varphi_1^n (1)$ is a prefix of $\varphi_1^n (0)$.

We proceed by induction to prove that $\varphi_1^n (1)$ is not
 suffix of $\varphi_1^n (0)$. For $n=1$ is it clear. Suppose it is true
 for $n\in \NN^*$. We prove it is also true for $n+1$.

We have  $\varphi_1^{n+1} (0) = \varphi_1^{n+1} (1)  \varphi_1^{n} (1)$
 and $\varphi_1^{n+1} (1) = \varphi_1^{n} (0)$. Suppose $\varphi_1^{n+1} (1)$ is a suffix of $\varphi_1^{n+1} (0)$. Looking at $M_{\varphi_1^{n}}$ we remark that $|\varphi_1^{n} (1)| < |\varphi_1^{n} (0)|$, therefore $\varphi_1^{n} (1)$ is a suffix of $\varphi_1^{n} (0)$ which contradicts the hypothesis. This concludes the first part of the proof. The other part can be achieved in the same way.
\end{proof}

\begin{lemma}
\label{existence}
Let $n\in \mathbb{N}^*$, $f_n = \varphi_1^n$, $g_n = E_0 \circ \widetilde{\varphi_1} \circ E_2 \circ \widetilde{\varphi_1}^{n-1}$ and $h_n= E_2 \circ E_0 \circ \widetilde{\varphi_1}^{n-1}$. Then there exists a morphism $\psi_n \in \MSE_2$ such that $\pi_2 \circ \psi_n = f_n $, $\pi_1 \circ \psi_n = g_n$ and $\pi_0 \circ \psi_n = h_n$.
\end{lemma}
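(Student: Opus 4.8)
The plan is to construct the morphism $\psi_n$ explicitly by intercalating the images of $f_n$, $g_n$ and $h_n$ letter by letter, then verify that this $\psi_n$ belongs to $\MSE_2$. First I would check that the hypotheses of Lemma \ref{intercalate} are met: by definition $f_n = \varphi_1^n$ has images on $\{0,1\}$, and from the explicit forms of $E_0$, $E_2$ and $\widetilde{\varphi_1}$ one sees that $g_n = E_0 \circ \widetilde{\varphi_1} \circ E_2 \circ \widetilde{\varphi_1}^{n-1}$ sends every letter to a word on $\{0,2\}$ while $h_n = E_2 \circ E_0 \circ \widetilde{\varphi_1}^{n-1}$ sends every letter to a word on $\{1,2\}$; also $f_n(2) = g_n(2) = h_n(2) = \varepsilon$. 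The preceding lemma (the one computing $M_f$, $M_g$, $M_h$) guarantees the crucial compatibility of multiplicities: $|f_n(a)|_0 = |g_n(a)|_0$, $|f_n(a)|_1 = |h_n(a)|_1$ and $|g_n(a)|_2 = |h_n(a)|_2$ for $a\in\{0,1\}$. These equalities are exactly what is needed to be able to interleave the three words into a single word $\psi_n(a) \in A_3^*$ whose three projections are $f_n(a)$, $g_n(a)$, $h_n(a)$ respectively.

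The subtle point is that Lemma \ref{intercalate} only tells us that such a $\psi_n$ exists \emph{provided} the three words actually intercalate between them, i.e. provided there is a common word $x$ with $\pi_2(x)=f_n(a)$, $\pi_1(x)=g_n(a)$, $\pi_0(x)=h_n(a)$. Equal letter-counts are necessary but not automatically sufficient for an interleaving to exist when we also insist the interleaving be consistent across all letters (so that $\psi_n$ is well defined as a morphism, not just defined on each letter separately). So the real content is to exhibit the interleaving concretely. I would do this by induction on $n$, or more directly by pushing the known interleaving structure of the Fibonacci word through the maps: one builds $\psi_n$ so that $\pi_2\circ\psi_n=f_n$, and then checks that applying $\pi_1$ and $\pi_0$ to this candidate yields exactly $g_n$ and $h_n$. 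The identities $E_2\circ E_0 = $ (the map $0\mapsto 1, 1\mapsto 0$ on the relevant letters, sending things into $\{1,2\}$) and the analogous rewriting of $g_n$ let one track, position by position, which of the three sub-alphabets each letter of $\psi_n(a)$ lands in; the multiplicity lemma ensures the bookkeeping closes up.

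Once $\psi_n$ is constructed and shown to satisfy $\pi_2\circ\psi_n=f_n$, $\pi_1\circ\psi_n=g_n$, $\pi_0\circ\psi_n=h_n$, it remains to show $\psi_n\in\MSE_2$, i.e. that $\psi_n$ is a morphism with Sturmian erasures and $\psi_n(2)=\varepsilon$. The latter is immediate since $f_n(2)=g_n(2)=h_n(2)=\varepsilon$ forces $\psi_n(2)=\varepsilon$. For the former, take any $x\in\WSE$; then $\pi_2(x)$ is a Sturmian word, and since $\psi_n(2)=\varepsilon$ we have $\psi_n(x)=\psi_n(\pi_2(x))$. Now $\pi_2\circ\psi_n = f_n = \varphi_1^n$ restricted to $\{0,1\}$ is a Sturmian morphism, so $\pi_2(\psi_n(x)) = \varphi^n(\pi_2(x))$ is Sturmian; similarly $\pi_1\circ\psi_n$ and $\pi_0\circ\psi_n$ restricted to $\{0,1\}$ are, up to the letter exchanges $E_0$, $E_2$, products of $\varphi$, $\widetilde\varphi$, $E$, hence Sturmian morphisms, so $\pi_1(\psi_n(x))$ and $\pi_0(\psi_n(x))$ are Sturmian. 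Therefore $\psi_n(x)\in\WSE$, which proves $\psi_n\in\MSE_2$.

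The main obstacle I anticipate is the explicit construction of the interleaving in the inductive step: one must check that the interleaving pattern is preserved when passing from $n$ to $n+1$, i.e. that $\psi_{n+1}$ can be taken compatible with $\psi_n$ under $\varphi_1$ (roughly, $\psi_{n+1}$ should relate to $\psi_n$ the way $\varphi_1^{n+1}$ relates to $\varphi_1^n$), and this requires matching up the $E_0$, $E_2$ twists in $g_n$, $h_n$ with the recursion. Everything else is either a direct appeal to Theorem \ref{BS93} and the multiplicity lemma, or routine verification left to the reader as in Lemma \ref{intercalate}.
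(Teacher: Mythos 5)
Your overall framework is the paper's: define $\psi_n(a)$ by intercalating $f_n(a)$, $g_n(a)$, $h_n(a)$ via Lemma \ref{intercalate}, establish the intercalation by induction on $n$, and deduce membership in $\MSE_2$ from the fact that the three projections restricted to $\{0,1\}$ are Sturmian morphisms (that last part of your argument is correct and is exactly what the paper leaves to the reader). But the heart of the lemma --- actually producing the intercalation in the inductive step --- is precisely the point you defer to as ``the main obstacle I anticipate,'' and the version you sketch would not go through as stated. You propose that $\psi_{n+1}$ ``should relate to $\psi_n$ the way $\varphi_1^{n+1}$ relates to $\varphi_1^n$,'' i.e.\ a one-step recursion. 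The one-step recursions of the three families are, however, \emph{not} parallel: since $f_{n+1}=f_n\circ\varphi_1$ while $g_{n+1}=g_n\circ\widetilde{\varphi_1}$ and $h_{n+1}=h_n\circ\widetilde{\varphi_1}$, one gets
$$
f_{n+1}(0)=f_n(0)f_n(1), \qquad g_{n+1}(0)=g_n(1)g_n(0), \qquad h_{n+1}(0)=h_n(1)h_n(0),
$$
so the blocks occur in opposite orders and concatenating the level-$n$ intercalations does not yield an intercalation of $f_{n+1}(0)$, $g_{n+1}(0)$, $h_{n+1}(0)$. (Your remark that equal letter counts are insufficient is right, but the obstruction is the relative order of letters inside each single word, not a consistency condition across different letters of $A_3$.)

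The paper's resolution, which is the real content of the proof, is to iterate once more: all three families satisfy the \emph{same} two-step recursion
$$
X_{n+1}(0)=X_{n-1}(0)\,X_{n-1}(1)\,X_{n-1}(0), \qquad X_{n+1}(1)=X_n(0), \qquad X\in\{f,g,h\},
$$
so that setting $\psi_{n+1}(0)=\psi_{n-1}(0)\psi_{n-1}(1)\psi_{n-1}(0)$ and $\psi_{n+1}(1)=\psi_n(0)$ does intercalate the required words. This forces a two-step induction anchored at two explicit base cases $\psi_1$ and $\psi_2$, which the paper writes out letter by letter. Without identifying this common two-step recursion (or some substitute for it), your plan does not close; the multiplicity lemma you invoke only matches letter counts and cannot by itself produce the interleaving.
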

\begin{proof}
We easily check that if $\psi $ is a morphism such that $\pi_2 \circ \psi = f_n$, $\pi_1 \circ \psi = g_n$ and $\pi_0 \circ \psi = h_{n}$, for some $n\in \NN$, then $\psi$ belongs to $\MSE_2$.

We proceed by induction on $n$ to prove what remains. For $n=1$, we have
\begin{center}
\begin{tabular}{lllllllllllllll}
$f_1 $ : & $ A_3^{*}$ & $ \longrightarrow  A_3^{*}$,       &
$g_1 $ : & $ A_3^{*}$ & $ \longrightarrow A_3^{*}$,        &
$h_1 $ : & $ A_3^{*}$ & $ \longrightarrow A_3^{*}$,        &
$\psi_1   $ : & $ A_3^{*}$ & $ \longrightarrow A_3^{*}$         \\
         & $ 0      $ & $ \longmapsto     01 $            &
         & $ 0      $ & $ \longmapsto     0  $            &
         & $ 0      $ & $ \longmapsto     1  $            &
         & $ 0      $ & $ \longmapsto     01 $            \\
         & $ 1      $ & $ \longmapsto     0  $            &
         & $ 1      $ & $ \longmapsto     20 $            &
         & $ 1      $ & $ \longmapsto     2  $            &
         & $ 1      $ & $ \longmapsto     20 $            \\
         & $ 2      $ & $ \longmapsto \varepsilon$        &
         & $ 2      $ & $ \longmapsto \varepsilon$        &
         & $ 2      $ & $ \longmapsto \varepsilon$        &
         & $ 2      $ & $ \longmapsto \varepsilon$.
\end {tabular}
\end{center}
The morphism $\psi_1 $ is such that $\pi_2 \circ \psi_1 = f_1$, $\pi_1\circ \psi_1 = g_1$ and $\pi_0 \circ \psi_1 = h_1$, and consequently $\psi_1 $ belongs to $\MSE_2$.
For $n = 2$, we have

\begin{center}
\begin{tabular}{lllllllllllllll}
$f_2 :$  & $ A_3^{*}$ & $ \longrightarrow  A_3^{*}$,       &
$g_2 :$  & $ A_3^{*}$ & $ \longrightarrow A_3^{*}$,        &
$h_2 :$  & $ A_3^{*}$ & $ \longrightarrow A_3^{*}$,        &
$\psi_2 : $ & $ A_3^{*}$ & $ \longrightarrow A_3^{*}$         \\
         & $ 0      $ & $ \longmapsto     010 $            &
         & $ 0      $ & $ \longmapsto     200 $            &
         & $ 0      $ & $ \longmapsto     21  $            &
         & $ 0      $ & $ \longmapsto     \hskip -1.1pt 2010$            \\
         & $ 1      $ & $ \longmapsto     01  $            &
         & $ 1      $ & $ \longmapsto     0   $            &
         & $ 1      $ & $ \longmapsto     1   $            &
         & $ 1      $ & $ \longmapsto     01  $            \\
         & $ 2      $ & $ \longmapsto \varepsilon$        &
         & $ 2      $ & $ \longmapsto \varepsilon$        &
         & $ 2      $ & $ \longmapsto \varepsilon$        &
         & $ 2      $ & $ \longmapsto \varepsilon$.
\end {tabular}
\end{center}

The morphism $\psi_2 $ is such that $\pi_2 \circ \psi_2 = f_2$, $\pi_1\circ \psi_2 = g_2$ and $\pi_0 \circ \psi_2 = h_2$, and consequently $\psi_2 $ belongs to $\MSE_2$.
Now we suppose the result is true for $n-1$ and $n\geq 2$. We prove it
 is also true for $n+1$.
We have

$$
\begin{array}{lllllllll}
f_{n+1}(0)  & = & f_{n-1}(0)f_{n-1}(1)f_{n-1}(0), & f_{n+1}(1)  & = & f_n(0),       & f_{n+1}(2)  & = & \varepsilon, \\
g_{n+1}(0)  & = & g_{n-1}(0)g_{n-1}(1)g_{n-1}(0), & g_{n+1}(1)  & = & g_n(0),       & g_{n+1}(2)  & = & \varepsilon, \\
h_{n+1}(0)  & = & h_{n-1}(0)h_{n-1}(1)h_{n-1}(0), & h_{n+1}(1)  & = & h_{n}(0) \ {\rm and}  & h_{n+1}(2)  & = & \varepsilon.
\end{array}
$$

From the induction hypothesis and Lemma \ref{intercalate} we know
 $f_{i}(a)$, $g_{i}(a)$, $h_{i}(a)$ intercalate between them for all
 $a\in A_3$ and all $i\in \{ n-1,n\}$. Consequently, using Lemma
 \ref{intercalate}, there is a morphism $\psi : A_3 \to A_3^*$ such that
$\pi_2 \circ \psi_{n+1} = f_{n+1} $, $\pi_1 \circ \psi_{n+1} = g_{n+1}$
 and $\pi_0 \circ \psi_{n+1} = h_{n+1}$.
\end{proof}

\begin{prop}
\label{psipremier}
For all $n \in \mathbb{N}^*$, the morphism $\psi_n $ defined in Lemma
 \ref{existence} is prime in $\MSE_2$.
\end{prop}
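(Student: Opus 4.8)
The plan is to apply Proposition~\ref{condpremier}(1): it suffices to show that $\psi_n(0)$ is neither a prefix nor a suffix of $\psi_n(1)$ and that $\psi_n(1)$ is neither a prefix nor a suffix of $\psi_n(0)$. Since $\psi_n \in \MSE_2$, Lemma~\ref{longueur} gives $|\psi_n(1)| \geq 2$ and, more usefully, looking at the matrices $M_{f_n} = M_{\varphi_1^n}$, $M_{g_n}$, $M_{h_n}$ displayed earlier, one reads off $|\psi_n(0)| = u_{n+1}+u_n+u_{n-1}+u_{n-2}+\cdots$ (a sum of three Fibonacci-type entries) while $|\psi_n(1)|$ is a smaller such sum; in particular $|\psi_n(1)| < |\psi_n(0)|$ for $n \geq 1$. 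So the only two dangerous configurations are: $\psi_n(1)$ is a prefix of $\psi_n(0)$, or $\psi_n(1)$ is a suffix of $\psi_n(0)$. I must rule both out.

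The key observation is that an erasure of $\psi_n$ is an erasure of $\psi_n(0)$ or $\psi_n(1)$. If $\psi_n(1)$ were a prefix of $\psi_n(0)$, then applying $\pi_1$ (erasing all $1$'s) would make $\pi_1\circ\psi_n(1) = g_n(1)$ a prefix of $\pi_1\circ\psi_n(0) = g_n(0)$; similarly $\pi_2\circ\psi_n$ would give $f_n(1) = \varphi_1^n(1)$ a prefix of $f_n(0) = \varphi_1^n(0)$, and $\pi_0\circ\psi_n$ would give $h_n(1)$ a prefix of $h_n(0)$. By Lemma~\ref{prefixsuffix}, $\varphi_1^n(1)$ \emph{is} a prefix of $\varphi_1^n(0)$, so the $\pi_2$-erasure gives no contradiction; I therefore pass to the $\pi_1$- or $\pi_0$-erasure. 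Here I would invoke the second part of Lemma~\ref{prefixsuffix}: since $g_n = E_0\circ\widetilde{\varphi_1}\circ E_2\circ\widetilde{\varphi_1}^{\,n-1}$, for $n \geq 2$ the word $g_n(1)$ is a suffix but \emph{not} a prefix of $g_n(0)$, which immediately contradicts "$g_n(1)$ a prefix of $g_n(0)$''. Thus $\psi_n(1)$ cannot be a prefix of $\psi_n(0)$ when $n\geq 2$. The suffix case is handled dually: if $\psi_n(1)$ were a suffix of $\psi_n(0)$, then by the first part of Lemma~\ref{prefixsuffix} $\varphi_1^n(1) = f_n(1)$ would be a suffix of $f_n(0) = \varphi_1^n(0)$, which is false; contradiction. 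Hence for $n\geq 2$ neither $\psi_n(1)$ is a prefix or suffix of $\psi_n(0)$, and since $|\psi_n(1)|<|\psi_n(0)|$ the reverse prefix/suffix relations are automatically impossible, so Proposition~\ref{condpremier}(1) applies and $\psi_n$ is prime in $\MSE_2$. The cases $n=1$ and $n=2$ are checked directly from the explicit tables for $\psi_1$ and $\psi_2$ given in Lemma~\ref{existence}: $\psi_1(0)=01$, $\psi_1(1)=20$, and $\psi_2(0)=2010$, $\psi_2(1)=01$, and in each case one verifies by inspection that no prefix/suffix relation holds, so $\psi_n$ is prime for those $n$ as well.

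The main obstacle is bookkeeping: making sure the "no prefix'' and "no suffix'' halves are each witnessed by the \emph{right} erasure, since one of the three erasures ($\pi_2$) is genuinely compatible with a prefix relation and so cannot be used to derive a contradiction there. The cleanest way to organize this is to note that Lemma~\ref{prefixsuffix} was designed precisely so that $g_n$ breaks prefixes (via "not a prefix'') and $f_n = \varphi_1^n$ breaks suffixes (via "not a suffix''); once this pairing is made explicit, the argument is routine. I would also double-check the degenerate case where an erasure could make one of the images empty — but $f_n(1) = \varphi_1^n(1) \neq \varepsilon$ and, by Lemma~\ref{longueur} applied to $\psi_n\in\MSE_2$, all of $g_n(0), g_n(1), h_n(0), h_n(1)$ are nonempty, so no such degeneracy arises.
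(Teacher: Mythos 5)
Your proof is correct and follows essentially the same route as the paper: reduce to Proposition~\ref{condpremier}(1), kill the prefix case by applying $\pi_1$ (so $g_n(1)$ would be a prefix of $g_n(0)$, contradicting Lemma~\ref{prefixsuffix}) and the suffix case by applying $\pi_2$ (so $f_n(1)$ would be a suffix of $f_n(0)$). Your explicit handling of $n=1$ by inspection is in fact slightly more careful than the paper, since the second part of Lemma~\ref{prefixsuffix} is only stated for $n\geq 2$.
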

\begin{proof}
We keep the notations of Lemma \ref{existence}.
Let $n\in \NN^*$. From Proposition \ref{condpremier} it suffices to prove that $\psi_n (1)$ is neither a prefix nor a suffix of
 $\psi_n (0)$. We proceed by contradiction: We suppose $\psi_n (1)$ is a prefix or a
 suffix of $\psi_n (0)$.

Suppose that $\psi_n (1)$ is a prefix of $\psi_n (0)$. Then $\pi_1 \circ \psi_n (1)$ is a prefix of $\pi_1 \circ \psi_n (0)$ and consequently $g_n(1)$ is
 a prefix of $g_n(0)$. This contradicts Lemma \ref{prefixsuffix}.

Suppose that $\psi_n (1)$ is a suffix of $\psi_n (0)$. Then $\pi_2 \circ \psi_n (1)$ is a suffix of $\pi_2 \circ \psi_n (0)$ and consequently $f_n (1)$ is
 a suffix of $f_n(0)$. This contradicts Lemma \ref{prefixsuffix} and proves the lemma.
\end{proof}

\begin{corollaire}
The set $\MSE_2$ contains infinitely many primes.
\end{corollaire}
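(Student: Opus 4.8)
The corollary is essentially a repackaging of the work already carried out, so the plan is short. The main step is to invoke Proposition~\ref{psipremier}, which asserts that for every $n\in\NN^*$ the morphism $\psi_n$ produced in Lemma~\ref{existence} is prime in $\MSE_2$; combined with Lemma~\ref{existence} itself (which places each $\psi_n$ in $\MSE_2$), this already exhibits a family $(\psi_n)_{n\in\NN^*}$ of primes of $\MSE_2$. It then remains only to check that this family is genuinely infinite, i.e. that the morphisms $\psi_n$ are pairwise distinct.

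For that I would argue as follows. Suppose $\psi_n=\psi_m$ for some $n,m\in\NN^*$; composing on the left with $\pi_2$ and using $\pi_2\circ\psi_n=f_n=\varphi_1^n$ gives $\varphi_1^n=\varphi_1^m$. Comparing the images of the letter $0$ and reading off the matrix $M_{\varphi_1^n}$ recalled just before Lemma~\ref{existence}, one gets $|\varphi_1^n(0)|=u_{n+1}+u_n=u_{n+2}$, and the Fibonacci sequence $(u_{n+2})_{n\geq 1}$ is strictly increasing, so $n=m$. (Equivalently, the matrices $M_{\psi_n}$ are already distinct because their ``$\pi_2$-parts'' $M_{\varphi_1^n}$ are.) Hence $\{\psi_n : n\in\NN^*\}$ is an infinite subset of the set of primes of $\MSE_2$.

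I do not expect any real obstacle: everything substantive — that $\psi_n\in\MSE_2$ (Lemma~\ref{existence}), and that $\psi_n$ is prime there via the prefix/suffix criterion of Proposition~\ref{condpremier} together with Lemma~\ref{prefixsuffix} (Proposition~\ref{psipremier}) — has already been done. The only point deserving a word of care is not to conflate ``$\MSE_2$ contains infinitely many primes'' with the weaker statement ``some morphism of $\MSE_2$ admits arbitrarily long prime factorisations''; the explicit, strictly length‑increasing family $(\psi_n)_{n\in\NN^*}$ disposes of this, and in passing shows that $\MSE_2$, and hence $\MSE$, contains prime morphisms with images of unbounded length.
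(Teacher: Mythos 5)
Your proposal is correct and follows essentially the same route as the paper: invoke Proposition~\ref{psipremier} for primality and then check the $\psi_n$ are distinct (a step the paper leaves as an exercise, stating only $\psi_n\neq\psi_{n+1}$). Your length argument via $|\pi_2\circ\psi_n(0)|=|\varphi_1^n(0)|=u_{n+1}+u_n$ in fact gives pairwise distinctness, which is exactly what the conclusion requires.
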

\begin{proof}
We left as an exercise to prove that for all $n\in \NN^*$ we have
 $\psi_n \not = \psi_{n+1}$, where $\psi_n$ is defined in Lemma
 \ref{existence}. Proposition \ref{psipremier} ends the proof.
\end{proof}

\subsection{Proof of the point (\ref{notfinitely}) of Theorem \ref{maintheo}}

We proceed by contradiction: We suppose there exists $F = \{ f_1, \dots ,
f_l\} \subset \MSE$ generating $\MSE$, i.e., all $g\in \MSE$ is a composition
of elements belonging to $F$.

\medskip

Let $N = \sup_{a\in A_3, 1\leq i\leq l} |f_i (a)| $, $(\psi_n )_{n\in
\NN}$ be the morphisms defined in Lemma \ref{existence} and $(u_n)_{n\in
\NN}$ be the
Fibonacci word defined in the previous section. We remark
$$
\lim_{n\rightarrow +\infty} \max_{a\in A_3} |\psi_n (a)|
\geq
\lim_{n\rightarrow +\infty} u_{n+1} =+\infty .
$$
We fix $n\in \NN$ such that $\max_{a\in A_3} |\psi_n (a)| > N$. By
hypothesis there exist $g_1, \dots , g_k$ in $F$ such that $\psi_n = g_1
\circ \cdots \circ g_k$. We set $h =g_2 \circ \cdots \circ g_k$. The morphism
$\psi_n$ belongs to $\MSE_a$ for some $a \in A_3$.  It implies
$\psi_n (2) = \psi_n (a) = \varepsilon$ and consequently $a=2$.

There exists $b\in A_3$ such that $g_1 \in \MSE_b$. We remark $\psi_n =
g_1\circ h = g_1 \circ \pi_b \circ h$. Two cases occurs.

\medskip

First case: For all $a\in \{ 0,1 \}$ we have $| \pi_b \circ h (a)| = 1$.

The morphism $h$ being a morphism with Sturmian erasures we cannot have \linebreak
$\pi_b \circ h (0) = \pi_b \circ h (1)$. Consequently $\psi_n = g_1\circ E_2$
or $\psi_n=g_1$. This implies there exists $a\in A_3$ such that $|g_1 (a)| >
N$ which is not possible.

\medskip

Second case: There exists $a\in \{ 0,1 \}$ such that $| \pi_b \circ h (a)| > 1$.

We remark $\pi_b \circ h = \pi_b \circ h\circ \pi_2$

If $b=2$ then $\pi_b \circ h_{|\{ 0,1 \}} : \{ 0,1 \} \to \{ 0,1 \}^* \subset A_3^*$ is a Sturmian morphism different
from $E$ and $Id_{\{ 0,1 \}}$. Hence from a remark we make in Subsection
\ref{almost} there exist $i$ and $j$ in $\{ 0,1 \}$, $i\not = j$, such that
the word $\pi_b \circ h_{|\{ 0,1 \}} (i)$ is a prefix or a suffix of $\pi_b
\circ h_{|\{ 0,1 \}} (j)$. Hence $\psi_n (i)$ is a prefix or a suffix of $\psi_n
(j)$. Proposition \ref{condpremier} implies $\psi_n$ is not prime in
$\MSE_2$ which contradicts Proposition \ref{psipremier}.

Let $b\not = 2$. We set $\{ b,c \} = \{ 0,1\}$.  Then, $E_c \circ \pi_b
\circ h_{|\{ 0,1 \}} :  \{ 0,1 \} \to \{ 0,1 \}^* \subset A_3^*$ is a Sturmian morphism different
from $E$ and $Id_{\{ 0,1 \}}$. Hence from a remark we make in Subsection
\ref{almost} there exist $i$ and $j$ in $\{ 0,1 \}$, $i\not = j$, such that
the word $E_c \circ \pi_b \circ h_{|\{ 0,1\}} (i)$ is a prefix or a suffix of
$E_c \circ \pi_b
\circ h_{|\{ 0,1 \}} (j)$. Hence $\psi_n (i)$ is a prefix or a suffix of $\psi_n
(j)$. Proposition \ref{condpremier} implies $\psi_n$ is not prime in
$\MSE_2$ which contradicts Proposition \ref{psipremier}.

This concludes the proof.

\section{Some further facts about words with Sturmian erasures}

\subsection{Geometrical remarks}

We recall that a Sturmian word can be viewed as a coding of a straight
half line in $\RR^2$ with direction $(1,\alpha)$ where $\alpha$
is a positive irrational number, or in other terms as a
trajectory of a ball in the game of billiards in the square with elastic reflexion on the boundary. We do not give the details here, we
refer the reader to \cite{Lo}.

Let us extend the construction given in \cite{Lo} to obtain what is
usually called {\it billiard words in the unit cube} $[0,1]^3$. Let
$d = (d_0,d_1 ,
d_2)\in [0, +\infty [^3$ and $\rho = ( \rho_0 , \rho_1 , \rho_2 ) \in [0,1 [^3 $.
Let $D$ be the half line with direction $d$
and intercept \linebreak $\rho$ that is to say $D = \{ td +  \rho ; t \geq 0
\}$. Consider the intersections of $D$ with the planes $x=a$, $y=a$,
$z=a$, $a\in \ZZ$: We denote by $I_0, I_1, \dots$ these consecutive intersection points .
   We say $I_n$ crosses the face  $F_i$, $i\in \{ 0,1,2 \}$, if the
   $i+1$-th coordinate of $I_n$ is an integer and the $i+1$-th
   coordinate of $I_{n+1} - I_n$ is not equal to $0$.

We set $\Omega_n = \{ i\in \{ 0,1,2 \} ; I_n \hbox{ crosses } F_i\}$.
Let $x= u_0 u_1 \dots$
be a word such that
$$
u_n =
\left\{
\begin{array}{ll}
i  & \hbox{ if } \Omega_n = \{ i \} , \\
ij & \hbox{ if } \Omega_n = \{ i,j \} \hbox{ where } i\not = j ,\\
ijk& \hbox{ if } \Omega_n = \{ i,j,k \} = \{ 0,1,2 \}.
\end{array}
\right.
$$

We say $x $
is a {\it billiard word in the unit cube} $[0,1]^3$ (with direction $d$ and
intercept $\rho $).
We can also say that $x$ is a coding of $D$. Of course a half line can
have several codings. One of the codings of a half line is periodic if
and only if $d\in \gamma \ZZ^3$ for some $\gamma \in \RR_+$.

When one of the coordinates of the
direction is equal to zero and the two others are rationally independent
we can easily deduce from \cite{Lo} (Chapter 2) that  $x$ is a Sturmian
word. The reciprocal is also true: All Sturmian words can be obtained in
this way (see \cite{Lo}).

We remark that if $x$ is a non-periodic cubic billiard word then $\pi_0 (x)$
is a Sturmian word with direction $d = (0 , d_1 ,
d_2)$ and intercept $\rho = (0, \rho_1, \rho_2)$ (i.e. the orthogonal
projection of $D$ onto $\{ 0 \} \times [0,+\infty [\times [0,+\infty
[$). We have the analogous remark for $\pi_1 (x)$ and $\pi_2 (x)$.
It is easy to conclude that a cubic billiard word is a word with
Sturmian erasures if and
only if it is non periodic and $d\in ] 0 , +\infty [^3$.

There exist words with Sturmian erasures that
are not cubic billiard words. For example, take the Fibonacci word $x$
(Example 1) and the morphism $\psi$ defined by $\psi (0) = 0012$ and
$\psi (1) = 01$. It is easy to see that $y=\psi (x)$ is a word with
Sturmian erasures. Let us show
that if $y$ was a cubic billiard word then the word 102 should appear in
$x$, which is not the case.

We briefly sketch the proof. Suppose $y$ is a cubic billiard word with
direction $d=(1,\alpha , \beta)$ and intercept $\rho$, then the words
$\pi_0 (y)$ and $\pi_2 (y)$ are Sturmian words with
respective directions $(0, \alpha , \beta) $ and $(1, \alpha , 0)$.
It can be shown that $\alpha = \theta -1$ and $\beta =
(\theta - 1)^2$ where $\theta = (\sqrt{5}+1)/2$. But with such a
direction $d=(1,\theta -1 , (\theta - 1)^2)$ easy calculus show that the
word 102 should appear in $y$.

\subsection{Balanced words}

Let us recall a characterization of Sturmian words due to Hedlund
and Morse \cite{HM2}. Let $A $ be a finite alphabet. We say a
word $x\in A^{\NN}$ is balanced if for all factors $u$ and $v$ of $x$
having the same length we have $||u|_a -
|v|_a| \leq 1$ for all $a\in A$.
Suppose $\card A = 2$. A word $x\in A^{\NN}$ is Sturmian if and only if
$x$ is non eventually periodic and balanced.
P. Hubert characterizes in \cite{Hu} the words on a three letters
alphabet that are balanced. This characterization shows that such words are not
words with Sturmian erasures.

\begin{defi}
Let $A$ be a finite alphabet.
We say $x \in A^{\NN}$ is $n$-balanced if $n$ is the least integer such
 that: For all words $u$ and $v$ appearing in $x$ and
 having the same length we have $\mid\mid u\mid_a-\mid v\mid_a\mid\leq n$ for all $a\in A$.
\end{defi}

Clearly, Sturmian words are $1$-balanced.

\begin{prop}
\label{ch5.46}
If $x\in A_3^{\NN }$ is a word with Sturmian erasures then $x$ is non eventually
 periodic and $2$-balanced.
\end{prop}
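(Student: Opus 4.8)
The plan is to prove the two assertions separately. That $x$ is not eventually periodic is immediate: if $x = uvvv\cdots$ then, erasing all occurrences of some letter $a$, the word $\pi_a(x)$ is again of the form $u'v'v'\cdots$ (take $u' = \pi_a(u)$ and note $v' = \pi_a(v)$ is nonempty for at least one choice of $a$ among the two letters actually occurring in $v$; if $v$ consists of a single repeated letter then $x$ is itself eventually periodic in the remaining two letters after erasing that letter, and a Sturmian word is never eventually periodic). Hence some $\pi_a(x)$ is eventually periodic, contradicting that it is Sturmian. So the real content is the bound on the balance.

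For the $2$-balance, let $u$ and $v$ be factors of $x$ of the same length $\ell$, and fix a letter $a \in A_3$; I want $\big||u|_a - |v|_a\big| \le 2$. Write $A_3 = \{a,b,c\}$. The idea is to compare $u$ and $v$ through the two erasure maps $\pi_b$ and $\pi_c$, each of which sends $x$ to a Sturmian, hence $1$-balanced, word. The words $\pi_b(u)$ and $\pi_b(v)$ are factors of the Sturmian word $\pi_b(x)$; they need not have the same length, but $|\pi_b(u)| = \ell - |u|_b$ and $|\pi_b(v)| = \ell - |v|_b$, and $|\pi_b(u)|_a = |u|_a$, likewise for $v$. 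The key quantitative tool is the following standard fact about a $1$-balanced (Sturmian) word $s$: if $p$ and $q$ are factors of $s$ with $|p| \le |q|$, then $|q|_a - |p|_a \in \{\,0,1,\dots\,\}$ is within $1$ of the ``expected'' count, and more precisely, extending $p$ on the right to a factor $p'$ of $s$ with $|p'| = |q|$, one has $|p'|_a - |p|_a$ equal to $|q| - |p|$ or that many minus the count of the complementary letter; combining with $1$-balance between $p'$ and $q$ gives $\big||q|_a - |p|_a - (|q|-|p|)\theta_a\big|$ controlled, but cleanly one just uses: in a $1$-balanced binary word, for factors of lengths $m \le m'$ the $a$-counts differ by at most $(m' - m) + 1$ in one direction and by at least $0$... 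The clean statement I will actually invoke is: \emph{for any two factors $p,q$ of a $1$-balanced word over $\{a,b\}$, $\big|(|p|_a - |q|_a)\big| \le \big||p|-|q|\big| + 1$.} Applying this with $p = \pi_b(u)$, $q = \pi_b(v)$ gives $\big||u|_a - |v|_a\big| \le \big||u|_b - |v|_b\big| + 1$, and symmetrically (using $\pi_c$) $\big||u|_a - |v|_a\big| \le \big||u|_c - |v|_c\big| + 1$.

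Now add the three letter-counts: $|u|_a + |u|_b + |u|_c = |v|_a + |v|_b + |v|_c = \ell$, so $(|u|_a - |v|_a) + (|u|_b - |v|_b) + (|u|_c - |v|_c) = 0$. Set $\alpha = |u|_a - |v|_a$, $\beta = |u|_b - |v|_b$, $\gamma = |u|_c - |v|_c$; then $\alpha + \beta + \gamma = 0$, $|\alpha| \le |\beta| + 1$ and $|\alpha| \le |\gamma| + 1$. From $\beta + \gamma = -\alpha$ we get $\max(|\beta|,|\gamma|) \ge |\alpha|/2$, but I need an upper bound on $|\alpha|$, so instead argue: $|\alpha| \le |\beta|+1$ and $|\alpha| \le |\gamma|+1$ give $2|\alpha| \le |\beta| + |\gamma| + 2$; if $\beta$ and $\gamma$ have opposite signs (or one is zero) then $|\beta| + |\gamma| = |\beta + \gamma| + 2\min(|\beta|,|\gamma|)$ is not directly bounded, so this crude addition is not enough and I must be more careful. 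The fix is to also apply the balance inequality to the pair $(b,c)$ in each binary projection — e.g.\ $\pi_a(x)$ is $1$-balanced over $\{b,c\}$, giving $|\beta| \le |\alpha| + 1$ and $|\gamma| \le |\alpha| + 1$ as well — and then the three symmetric inequalities $|\alpha| \le |\beta|+1$, $|\beta| \le |\gamma|+1$, $|\gamma| \le |\alpha|+1$ together with $\alpha+\beta+\gamma = 0$ force each of $|\alpha|,|\beta|,|\gamma| \le 2$: indeed WLOG $|\alpha| = \max$, then $2|\alpha| = |\beta + \gamma + 2\alpha| \le$ ... the honest route is a short finite case check on the possible sign patterns of $(\alpha,\beta,\gamma)$ summing to zero, which pins the maximum absolute value at $2$, with $(2,-1,-1)$ and permutations/sign-flips the extremal cases. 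That this extremal pattern actually occurs (so the constant $2$ is sharp and the word is not $1$-balanced) can be exhibited on Example~1, matching the introduction's remark.

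\textbf{Main obstacle.} The delicate point is the comparison lemma for binary $1$-balanced words relating $a$-counts of factors of \emph{different} lengths — one must pass from the equal-length balance condition to an inequality involving the length difference, controlling the ``off by $\theta$'' discrepancy by exactly $1$. Once that lemma is in hand with the right constant, the combinatorial bookkeeping with $\alpha+\beta+\gamma=0$ and the six inequalities is a routine finite case analysis; the only real risk is being sloppy about whether the constant in the projection lemma is $1$ or $2$, which directly changes the final bound.
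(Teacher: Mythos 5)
Your plan is correct, and it reorganizes the argument rather than reproducing the paper's. The paper proceeds by contradiction: assuming some letter $a$ has imbalance $n(a)\ge 3$ between equal-length factors $u,v$, it orders the letters so that $n(a)\ge n(b)\ge n(c)$, notes $n(a)=n(b)+n(c)$ (hence $n(b)\ge 2$), and then explicitly extracts a sub-factor $u'$ of $u$ with $|\pi_c(u')|=|\pi_c(v)|$ and $|\pi_c(u')|_a-|\pi_c(v)|_a\ge n(b)\ge 2$, contradicting the $1$-balance of the single projection $\pi_c(x)$. You instead isolate a general off-length comparison lemma for $1$-balanced binary words, $\bigl||p|_a-|q|_a\bigr|\le \bigl||p|-|q|\bigr|+1$ (which is correct: extend the shorter factor to the right to the length of the longer one and apply $1$-balance once), apply it to all three projections to obtain the symmetric system $|\alpha|\le|\beta|+1$, $|\alpha|\le|\gamma|+1$, \dots, and finish arithmetically using $\alpha+\beta+\gamma=0$. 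The case check you leave as ``routine'' does close: if $M=|\alpha|$ is the maximum, the six inequalities force $M-1\le|\beta|,|\gamma|\le M$, and $M=|\beta+\gamma|$; when $\beta$ and $\gamma$ have the same sign this gives $M=|\beta|+|\gamma|\ge 2(M-1)$, hence $M\le 2$, and when they have opposite signs (or one vanishes) $M=\bigl||\beta|-|\gamma|\bigr|\le 1$. Your route buys symmetry and a reusable lemma, and avoids the paper's explicit construction of $u'$; the price is that the lemma's proof is exactly the length-equalization trick the paper performs in situ, so the two arguments share their core mechanism. Your non-eventual-periodicity argument is fine and supplies the detail the paper dismisses as clear.
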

\begin{proof}
It is clear $x$ is non eventually periodic. From a previous remark we
 know that $x$ is not $1$-balanced.

Suppose $x$ is $n$-balanced with $n\geq 3$: There exist $e\in A_3$ and two words
 $u$ and $v$ appearing in $x$ and having the same length such that $| |u|_e-| v|_e|\geq
3$.

For all $a\in A_3$ we set $n(a) = || u|_a- | v|_a|$.
Then we can set $A_3 = \{ a,b,c\}$
 where $n(a)\geq 3$ and $n(a) \geq n(b) \geq n(c)$. Without loss of
 generality we suppose $| u|_a- | v|_a = n(a)$. As $|u| = |v|$ we have
$n(a) = (|v|_b -|u|_b) + (|v|_c -|u|_c)$. Consequently we necessarily
 have $|v|_b -|u|_b \geq 0$ and $|v|_c -|u|_c \geq 0$ because $n(a) \geq n(b) \geq n(c)$. Thus $ n(b) = |v|_b -|u|_b$, $ n(c) = |v|_c -|u|_c$ and \linebreak $n(a) = n(b) +n(c)$. We also see that $n(b)\geq 2$ and $n(c)\geq 0$.

Suppose there exists a factor $u'$ of the word $u$ verifying
 $|\pi_c(u')|=|\pi_c(v)|$ and \linebreak $|\pi_c(u')|_a-|\pi_c(v)|_a \geq 2$. Then
 this would say that $\pi_c (x)$ is not balanced and a fortiori not
 Sturmian which would end the proof.

Let us find such a $u'$.
We have $|\pi_c (u)| \geq |\pi_c (v)| \geq |v|_b\geq 2$. Hence there
 exists a non-empty word $u'$ satisfying $|\pi_c(u')|=|\pi_c(v)|$ and
 having an occurrence in $u$.
Moreover
$$
|\pi_c (u')|_a + |\pi_c (u')|_b
=
|\pi_c (u')|
=
|\pi_c (v)|
=
|v|-|v|_c
$$
$$
=
|u|-|v|_c
=
|u|_a + |u|_b + |u|_c - |v|_c
=
|\pi_c (u)|_a + |\pi_c (u)|_b -n(c) .
$$
Hence
$$
|\pi_c (u)|_a - n(c)
=
|\pi_c (u')|_a + |\pi_c (u')|_b -|\pi_c (u)|_b
\leq
|\pi_c (u')|_a
$$
and then
$$
2
\leq
n(b)
=
n(a)-n(c)
=
|\pi_c (u)|_a - |\pi_c (v)|_a  - n(c)
\leq
|\pi_c (u')|_a - |\pi_c (v)|_a,
$$
which ends the proof.
\end{proof}

\subsection{Complexity}

Let $x$ be a word with Sturmian erasures and $f$ be a morphism belonging to $\MSE_i$ for some $i\in A_3$. Then $\pi_i (x)$ is a Sturmian word and \linebreak $f (x) = f(\pi_i (x))$. Consequently from a result of Coven and Hedlund \cite{CH} we deduce there exist two integers $n_0$ and $k$ such that $P_x (n) = n+k$ for all $n\geq n_0$.

For example, let $F$ be the Fibonacci word and $f: \{ 0,1,2 \}
\rightarrow \{ 0,1,2 \}^*$ be the morphism defined by $f(0)=0102$,
$f(1)=01$ and $f(2)=\varepsilon$. It is a morphism with Sturmian
erasures and $y = f(F)$ is a word with Sturmian erasures. In fact it is
a cubic billiard word with direction $d = (1,\theta -1 , (\theta -
1)^2)$ and intercept $\rho = (0,\theta -1 , (\theta -
1)^2)$, where $\theta$ is the golden mean $(\sqrt{5}+1)/2$.

This does not contradict the result in \cite{AMST} saying that if
$1,\alpha $ and $ \beta $ are rationally independent then the
complexity of the cubic billiard word with direction $(1,\alpha , \beta
)$ and intercept $\rho \in ]0,1[^3$ is $n^2 + n + 1$, because
$-1+(\alpha - 1) + (\alpha - 1)^2 = 0$.

\subsection{Conclusion}
Many generalizations of the Sturmian words were tried
 (more letters, applications of $\mathbb{Z}^2$ to \{0, 1\}, ...) but none
 appeared to be entirely suitable in the sense that it seems impossible
 to extend these properties to a more general domain astonishing varieties of the properties characterizing these words. The example which we chose for
 this paper, does not derogate from this rule. Nevertheless, the fact
 that $\MSE$ is not given by a finite generator shows a
 fundamental difference between the Sturmian words and any
 generalization with more than two letters because the definition
 adopted here was less ``compromising'' possible. Furthermore, this
 definition gives a words of a complexity structurally similar to the one of
 the Sturmian words.



\begin{thebibliography}{MMM}

\bibitem[AMST]{AMST}
P. Arnoux, C. Mauduit, I. Shiokawa and J.-I. Tamura,
{\it Complexity of sequences defined by billiards in the cube},
Bull. Soc. Math. France 122 (1994),
1--12.


\bibitem[BS]{BS}
J. Berstel and P. S\'e\'ebold,
{\it A characterization of {S}turmian morphisms}, Lecture Notes in
Comput. Sci. 711 (1993), 281-290.

\bibitem[CH]{CH}
E. M. Coven and G. A. Hedlund,
{\it Sequences with {M}inimal {B}lock {G}rowth},
Math. Systems Theory 7 (1973), 138-153.

\bibitem[He]{He}
G. A. Hedlund,
{\it Sturmian minimal sets},
Amer. J. Math 66 (1944), 605--620.

\bibitem[Hu]{Hu}
P. Hubert,
{\it Suites \'equilibr\'ees},
Theoret. Comput. Sci. 242 (2000), 91-108.

\bibitem[HM1]{HM1}
G. A. Hedlund and M. Morse,
{\it Symbolic dynamics},
Amer. J. Math. 60 (1938), 815-866.

\bibitem[HM2]{HM2}
G. A. Hedlund and M. Morse,
{\it Symbolic dynamics II. Sturmian trajectories},
Amer. J. Math. 62 (1940), 1-42.

\bibitem[Lo]{Lo}
M. Lothaire,
{\it Algebraic Combinatorics on Words},
Cambridge University Press (2002).

\bibitem[MS]{MS}
F. Mignosi and P. S\'e\'ebold,
{\it Morphismes sturmiens et r\`egles de Rauzy},
J. Th\'eor. Nombres Bordeaux 5 (1993), 221--233.

\bibitem[Qu]{Qu}
M. Queff\'elec, {\it Substitution Dynamical Systems-Spectral
Analysis}, Lecture Notes in Math. 1294 (1987).

\end{thebibliography}
\end{document}